\documentclass[12pt]{amsart}

\usepackage[all]{xy}
\usepackage{amsmath}
\usepackage{amssymb}
\usepackage{amsfonts}
\usepackage{amsthm}
\usepackage{verbatim}
\usepackage{graphics}
\usepackage{graphicx,color}

\textwidth=6.5in
\topmargin=0.2in
\oddsidemargin=-0.00in
\evensidemargin=-0.00in
\textheight = 8.5in

\newcommand{\R}{\mathbb{R}}
\newcommand{\C}{\mathbb{C}}

\newcommand{\g}{\mathfrak{g}}
\newcommand{\kk}{\mathfrak{k}}
\newcommand{\q}{\mathfrak{q}}
\newcommand{\lev}{\mathfrak{l}}
\newcommand{\p}{\mathfrak{p}}
\newcommand{\uu}{\mathfrak{u}}
\newcommand{\ta}{\mathfrak{t}}

\def\GL{{\rm GL}}
\def\SL{{\rm SL}}

\def\SO{{\rm SO}}
\def\GSp{{\rm GSp}_4}
\def\Sp{{\rm Sp}}

\def\PGL{{\rm PGL}}

\def\Ind{{\rm Ind}}

\def\<{\langle}
\def\>{\rangle}

\newcommand{\nc}{\newcommand}
\nc{\nt}{\newtheorem}
\nc{\dmo}{\DeclareMathOperator}
\nc{\enm}{\ensuremath}

\numberwithin{equation}{section}
\newtheorem{rmk}[equation]{Remark}
\newtheorem{thm}[equation]{Theorem}
\newtheorem{conjecture}[equation]{Conjecture}
\newtheorem{prop}[equation]{Proposition}
\newtheorem{lemma}[equation]{Lemma}
\nt{defn}[equation]{Definition}

\nt{assumption}[equation]{Assumption}

\setcounter{secnumdepth}{5}

\title[\bf Transfer of cohomological representations]{Functorial transfer of Cohomological Representations from $\Sp(4,\R)$ to $\GL(5,\R)$}
\author{\bf Makarand Sarnobat}
\date{\today}
\address{Indian Institute of Science Education and Research, Bhopal Bypass Road, Bhauri, Bhopal 462 066, Madhya Pradesh INDIA.}
\email{makarand.sarnobat16@gmail.com}


\begin{document}

\maketitle

\begin{abstract}
Let $G=\Sp(4,\R)$ and let $\pi$ be an irreducible, unitary representation of $G$ which is cohomological with respect to trivial coefficients. Using the inclusion from $\SO(5,\C)$ to $\GL(5,\C)$, we transfer $\pi$ to an irreducible representation $\iota(\pi)$ of $\GL(5,\R)$ and determine how the property of being cohomological behaves under Langlands functoriality. We also consider representations which are cohomological with respect to non-trivial coefficients.
\end{abstract}

\section{Introduction}

The main aim of this article is to study the cohomological properties of representations of $\GL(5,\R)$ which are obtained by a Langlands transfer of cohomological representations of $\Sp(4,\R)$. This project started with the following observation of Labesse and Schwermer: A cohomological representation $\pi$ of $\GL(2,\R)$ transfers to a cohomological representation of $\GL(3,\R)$ via the symmetric power transfer (see \cite{labesse-schwermer}). This result was extended for the symmetric power transfer from $\GL(2,\R)$ to $\GL(n+1,\R)$ by Raghuram in \cite{Ra}. Such a result was then used to study the arithmetic of symmetric power $L$-functions attached to $\pi.$ The reader is also referred to \cite{Raghuram-Shahidi} where there is a general discussion involving Langlands functoriality, cohomological representations, and applications to the special values of $L$-functions. Further, in \cite{Ra-Sa}, when does a tempered representation of a classical group transfer to a cohomological representation of an appropriate $\GL(n,\R)$ or $\GL(n,\C)$ was determined by the author and Raghuram. This led to the following question: When does a cohomological representation (tempered or not) of a classical group transfer to a cohomological representation of an appropriate $\GL(n,\R)$ or $\GL(n,\C)$?

We answer this question completely in the special case of transferring cohomological representations with trivial coefficients from $\Sp(4,\R)$ to $\GL(5,\R)$. The main result of this article is Theorem \ref{sp4-main-result-triv-coeff}, which states that a cohomological representation of $\Sp(4,\R)$ is transferred to a cohomological representation if it is the trivial representation, a discrete series representation or it is induced from the Siegel parabolic.  We also work out a toy case of transfer from $\SL(2,\R)$ to $\GL(3,\R)$ in Section \ref{SL2}. This example does not shed much light on whether cohomologicalness is preserved under functoriality in general since the only cohomological representations of $\SL(2,\R)$ are the discrete series representations and the trivial representation. The two main tools which are used in proving Theorem \ref{sp4-main-result-triv-coeff} are the Vogan-Zuckerman classification of unitary, irreducible cohomological representations (which will be called cohomological representations)which can be found in \cite{Vo-Zu} and a similar classification for $\GL(5,\R)$ given by Speh in \cite{Sp2}. Section \ref{Notations} introduces basic definitions and fixes notations. Sections \ref{V-Z Classification} and \ref{Speh} recall the Vogan-Zuckerman classification of cohomological representations and Speh's classification. Then, using the classification of Vogan-Zuckerman, we list down all the cohomological representations of $\Sp(4,\R)$ in Section \ref{V-Z for Sp}. We then explicitly compute the transfer of these representations in section \ref{Transfer triv-coeff} and check, using Speh, which of the resulting representations are cohomological. Finally, we summarize our results for cohomological representations with trivial coefficients in Section \ref{Summary Triv-coeff} and we make further observations in the non-trivial coefficients case in Section \ref{Transfer non-triv coeff}. Though we do not have a complete result in the case of the non-trivial coefficients, we make a plausible conjecture in section \ref{Summary non-triv coeff}.

\bigskip
\noindent {\small {\it Acknowledgements:} 
I would like to thank Raghuram for suggesting this problem and giving his valuable inputs from time to time. I would also like to thank Dipendra Prasad and Arvind Nair for their interest in the results of this project, and for helpful tutorials on Langlands parameters.}

\bigskip
\section{Background and Notations}
\label{Notations}
Let $\Sp(2n,\R)=\{A \in \GL(2n,\R): {}^tAJA=J \},$ where $J=\begin{pmatrix}
0 & I_n \\
-I_n & 0
\end{pmatrix}.$ Let $\g_0$ be the corresponding real Lie algebra. Let $$\mathfrak{h}_0=\Bigg\{ \begin{pmatrix}
     &        &      & & x_1 &        & \\
     & 0      &      & &     & \ddots & \\
     &        &      & &     &        & x_n \\
-x_1 &        &      & &     &        & \\
     & \ddots &      & &     &    0   & \\
     &        & -x_n & &     &        &
\end{pmatrix} \Bigg| x_i \in \R \Bigg\}.$$
Let $K= \left\lbrace  \begin{pmatrix}
A & B \\
-B & A
\end{pmatrix}: \ A, B \in \GL(n,\R), A{}^tB={}^tBA, \  A{}^tA + B{}^tB = I_n \right\rbrace $ be a maximal compact subgroup of $\Sp(2n,\R)$ and $W_K$ be the Weyl group of $K$. Any element of $W_K$ acts on an element of $i\mathfrak{h}_0$ by permuting the entries $x_i$.\\

For $G = \Sp(4,\R)$, we fix an appropriate basis for the Lie algebra of $G$. We fix the following basis (see \cite{Am-Sc}):
\begin{align*}
Z=-i\begin{pmatrix}
0 & 0 & 1 & 0 \\
0 & 0 & 0 & 0 \\
-1 & 0 & 0 & 0 \\
0 & 0 & 0 & 0
\end{pmatrix}, \hskip 8mm  &  Z'= -i\begin{pmatrix}
0 & 0 & 0 & 0 \\
0 & 0 & 0 & 1 \\
0 & 0 & 0 & 0 \\
0 & -1 & 0 & 0
\end{pmatrix},\\
N_+=\frac{1}{2}\begin{pmatrix}
0 & 1 & 0 & -i \\
-1 & 0 & -i & 0 \\
0 & i & 0 & 1 \\
i & 0 & -1 & 0
\end{pmatrix}, \hskip 5mm &  N_- = \frac{1}{2}\begin{pmatrix}
0 & 1 & 0 & i \\
-1 & 0 & i & 0 \\
0 & -i & 0 & 1 \\
-i & 0 & -1 & 0
\end{pmatrix},
\end{align*}
\newpage
\begin{align*}
X_+=\frac{1}{2}\begin{pmatrix}
1 & 0 & i & 0 \\
0 & 0 & 0 & 0 \\
i & 0 & -1 & 0 \\
0 & 0 & 0 & 0
\end{pmatrix}, \hskip 5mm & X_-=\frac{1}{2}\begin{pmatrix}
1 & 0 & -i & 0 \\
0 & 0 & 0 & 0 \\
-i & 0 & -1 & 0 \\
0 & 0 & 0 & 0
\end{pmatrix}, \\
P_{1+} = \frac{1}{2}\begin{pmatrix}
0 & 1 & 0 & i \\
1 & 0 & i & 0 \\
0 & i & 0 & -1 \\
i & 0 & -1 & 0
\end{pmatrix}, \hskip 5mm & P_{1-} = \frac{1}{2}\begin{pmatrix}
0 & 1 & 0 & -i \\
1 & 0 & -i & 0 \\
0 & -i & 0 & -1 \\
-i & 0 & -1 & 0
\end{pmatrix},\\
P_{0+} = \frac{1}{2}\begin{pmatrix}
0 & 0 & 0 & 0 \\
0 & 1 & 0 & i \\
0 & 0 & 0 & 0 \\
0 & i & 0 & -1
\end{pmatrix}, \hskip 5mm & P_{0-} = \frac{1}{2}\begin{pmatrix}
0 & 0 & 0 & 0 \\
0 & 1 & 0 & -i \\
0 & 0 & 0 & 0 \\
0 & -i & 0 & -1
\end{pmatrix}.
\end{align*}

Note that, we have the Cartan decomposition for $\g = \mathfrak{sp}(4) = \kk \oplus \p$, where $\kk = \< Z,Z',N_+,N_- \>$ and $\p = \< X_+,X_-,P_{1+},P_{1-},P_{0+},P_{0-} \>.$

The table of Lie brackets for the above basis is as follows:
\vskip 3mm
\begin{center}
\begin{tabular}{|c|c|c|c|c|c|c|c|c|c|c|}
\hline
  & $Z$ & $Z'$ & $N_+$ & $N_-$ & $X_+$ & $X_-$ & $P_{1+}$ & $P_{1-}$ & $P_{0+}$ & $P_{0-}$\\
  \hline
$Z$ & 0 & 0 & $N_+$ & $-N_-$ & $2X_+$ & $-2X_-$ & $P_{1+}$ & $-P_{1-}$ & 0 & 0 \\
\hline
$Z'$ & 0 & 0 & $-N_+$ & $N_-$ & $0$ & $0$ & $P_{1+}$ & $-P_{1-}$ & $2P_{0+}$ & $-2P_{0-}$\\
\hline
$N_+$ & $-N_+$ & $N_-$ & 0 & $Z'-Z$ & $0$ & $-P_{1-}$ & $2X_+$ & $-2P_{0-}$ & $P_{1+}$ & $0$\\
\hline
$N_-$ & $N_-$ & $-N_-$ & $Z-Z'$ & 0 & $-P_{1+}$ & $0$ & $-2P_{0+}$ & $2X_{-}$ & $0$ & $P_{1-}$ \\
\hline
$X_+$ & $-2X_+$ & $0$ & $0$ & $P_{1+}$ & $0$ & $Z$ & $0$ & $N_+$ & $0$ & $0$ \\
\hline
$X_-$ & $2X_-$ & $0$ & $P_{1-}$ & $0$ & $-Z$ & $0$ & $N_-$ & $0$ & $0$ & $0$ \\
\hline
$P_{1+}$ & $-P_{1+}$ & $-P_{1+}$ & $-2X_+$ & $2P_{0+}$ & $0$ & $-N_-$ & $0$ & $Z+Z'$ & $0$ & $N_+$ \\
\hline
$P_{1-}$ & $P_{1-}$ & $P_{1-}$ & $2P_{0-}$ & $-2X_-$ & $-N_+$ & $0$ & $-Z-Z'$ & $0$ & $N_-$ & $0$ \\
\hline
$P_{0+}$ & $0$ & $-2P_{0+}$ & $-P_{1+}$ & $0$ & $0$ & $0$ & $0$ & $-N_-$ & $0$ & $Z'$ \\
\hline
$P_{0-}$ & $0$ & $2P_{0-}$ & $0$ & $-P_{1-}$ & $0$ & $0$ & $-N_+$ & $0$ & $-Z'$ & $0$ \\
\hline
\end{tabular}
\end{center}

\vskip 3mm
These will come in handy for computations later on.

\section[Cohomological representations]{Vogan-Zuckerman classification of cohomological representations}
\label{V-Z Classification}
We briefly recall the Vogan-Zuckerman classification for cohomological representations and an algorithm to compute the Langlands inducing data for these representations. For more details the reader is referred to \cite{Vo-Zu}.  Let $G$ be a connected real semi-simple Lie group with finite center. Let $\g_0$ be the Lie algebra of $G$ and $\g$ be the complexification of $\g_0$. Let $K \subseteq G$ be a maximal compact subgroup of $G$ and $\theta$ be the corresponding Cartan involution of $G$. Then, we have the Cartan decomposition $$\g = \kk \oplus \p,$$ where $\kk$ is the $+1$ eigenspace of $\theta$ and $\p$ the $-1$
eigenspace.

Harish-Chandra in $1953$ proved the following result:

\begin{thm}[Harish-Chandra, \cite{H-C1}]
Let $(\pi,V)$ be an irreducible unitary representation of $G$. Then $V_{K}^{{}^\infty}$ is irreducible as a $\g$-module and determines $\pi$ up to unitary equivalence, where $V_{K}^{{}^\infty}$ is the subspace of smooth $K$-finite vectors in $V$.
\end{thm}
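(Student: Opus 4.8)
The plan is to proceed in three steps: first make sense of $V_K^\infty$ as a $(\g,K)$-module, then prove its irreducibility by passing submodules back and forth between the algebraic and the analytic pictures, and finally upgrade an algebraic isomorphism of such modules to a unitary equivalence.

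\emph{Step 1: the module structure.} I would first recall that every $K$-finite vector in $V$ is in fact analytic, hence smooth, so that $V_K^\infty$ coincides with the full space $V_K$ of $K$-finite vectors; this lets me work with $V_K$ directly. The other input here is \emph{admissibility}: each irreducible $K$-type occurs in $V$ with finite multiplicity. This is the theorem that irreducible unitary representations of $G$ are CCR; I would either cite it or sketch Godement's argument via the algebra of $K$-finite, $K$-bi-invariant smoothing operators acting on $V$. Granting this, for $X\in\g$ and a smooth $v\in V_K$ the identity $\pi(k)\pi(X)v=\pi(\mathrm{Ad}(k)X)\pi(k)v$ shows that $\pi(X)v$ lies in the span of the finite-dimensional set $\{\pi(\mathrm{Ad}(k)X)\pi(k)v:k\in K\}$, so $\g$ preserves $V_K$; together with the evident $K$-action this makes $V_K$ a $(\g,K)$-module, admissible and finitely generated.

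\emph{Step 2: irreducibility.} Let $W\subseteq V_K$ be a nonzero $\g$-submodule. Since $G$ is connected, its maximal compact subgroup $K$ is connected, and for $w\in W$ the finite-dimensional $\kk$-module $U(\kk)w\subseteq W$ integrates to a representation of $K$, forcing $Kw\subseteq W$; hence $W$ is $K$-stable, so $W=\bigoplus_\tau\big(W\cap V(\tau)\big)$. Its closure $\bar W$ in $V$ is stable under $K$ and, because for smooth $w\in W$ one has $\pi(\exp tX)w=\sum_n\frac{t^n}{n!}\pi(X)^n w\in\bar W$, also under $\exp\p$; by the Cartan decomposition $G=K\exp\p$ the space $\bar W$ is $G$-invariant, so $\bar W=V$ by irreducibility of $\pi$. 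Finally, for each $K$-type $\tau$ the orthogonal projector $E_\tau=\dim(\tau)\int_K\overline{\chi_\tau(k)}\,\pi(k)\,dk$ is continuous with image the finite-dimensional, hence closed, subspace $V(\tau)$; therefore $V(\tau)=E_\tau(\bar W)\subseteq\overline{E_\tau(W)}=E_\tau(W)=W\cap V(\tau)\subseteq W$. Summing over all $\tau$ gives $W\supseteq\bigoplus_\tau V(\tau)=V_K$, so $V_K$ is irreducible as a $\g$-module.

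\emph{Step 3: $V_K^\infty$ determines $\pi$.} Suppose $(\pi_1,V_1)$ and $(\pi_2,V_2)$ are irreducible and unitary with a $\g$-isomorphism $T\colon(V_1)_K\xrightarrow{\ \sim\ }(V_2)_K$, which by Step 2's argument is automatically $K$-equivariant. Pulling back the inner product of $(V_2)_K$ along $T$ yields a positive-definite Hermitian form on $(V_1)_K$ that is invariant for the pair $(\g,K)$ in the same sense as the original one (i.e. compatible with the $*$-structure $X\mapsto-\bar X$ on $U(\g)$ coming from unitarity). By Schur's lemma for the irreducible admissible $(\g,K)$-module $(V_1)_K$, such invariant Hermitian forms are unique up to a positive real scalar, so after rescaling $T$ is isometric on $K$-finite vectors; being defined on the dense subspace of analytic vectors and commuting with the skew-adjoint operators $\pi_i(X)$, it extends to a unitary isomorphism $V_1\to V_2$ intertwining $\pi_1$ and $\pi_2$.

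The main obstacle is the admissibility statement underlying Step 1: that irreducible unitary representations of $G$ are CCR is Harish-Chandra's theorem and is not something one disposes of in a few lines, so in practice I would invoke it. The remaining subtlety is in Step 3 — checking that the Hermitian form transported by $T$ is invariant in precisely the sense needed for Schur's lemma, and that an isometry defined on the dense subspace $(V_i)_K$ of analytic vectors genuinely globalizes to a $G$-intertwiner — but this is standard once the $*$-compatibility is set up carefully.
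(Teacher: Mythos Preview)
The paper does not prove this statement; it is quoted as a classical background result with a citation to Harish-Chandra's 1953 paper, so there is no in-paper argument to compare against. Your three-step outline is the standard modern proof and is essentially correct.

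One point in Step~2 is worth tightening: the $G$-invariance of $\overline{W}$. The Taylor expansion $\pi(\exp tX)w=\sum_n\frac{t^n}{n!}\pi(X)^n w$ is only guaranteed to converge for small $t$, so invoking the Cartan decomposition $G=K\exp\p$ directly is not quite enough, since you have not yet shown $\exp X\cdot\overline{W}\subseteq\overline{W}$ for \emph{all} $X\in\p$. The clean fix is to argue via duality: for any continuous linear functional $f$ vanishing on $W$ and any $w\in W$, the function $g\mapsto f(\pi(g)w)$ is real-analytic on $G$ (since $w$ is an analytic vector), and all its derivatives at the identity are of the form $f(\pi(X_1)\cdots\pi(X_n)w)=0$ by $\g$-invariance of $W$; hence it vanishes identically on the connected group $G$, giving $\pi(g)w\in\overline{W}$ for every $g$. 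With this adjustment your Steps~2 and~3 go through as written.
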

The subspace $V_{K}^{{}^\infty}$ has a $(\g,K)$-module structure and the result implies that it is enough to study $(\g,K)$-modules. Vogan-Zuckermann describes those $(\g,K)$-modules for which the $(\g,\kk)$-cohomology groups do not vanish. We need two parameters: a $\theta$-stable parabolic subalgebra $\q$ of $\g$ and an admissible homomorphism $\lambda$ on the Levi part of $\q$.

We construct a $\theta$-stable parabolic subalgebra as follows: Let $x \in i\kk_0$. Since $K$ is compact, $ad(x): \g \rightarrow \g$ is diagonalizable with real eigenvalues. Define,
\begin{eqnarray*}
\q & = & \text{ sum of non-negative eigen-spaces of $ad(x)$},\\
\lev & = & \text{ the zero eigen-space of $ad(x)$ = centralizer of $x$},\\
\uu & = & \text{ sum of positive eigen-spaces.}
\end{eqnarray*}

Then $\q$ is a parabolic subalgebra of $\g$ and $\q = \lev + \uu$ is the Levi decomposition of $\q$. Further $\lev_0 = \g_0 \cap \lev$. Since $\theta(x) =x$, the subalgebras $\q,\lev,\uu$ are all invariant under the Cartan involution $\theta$. The subalgebra $\q$ is called a $\theta$-stable parabolic subalgebra which is one of the two parameters. Let $\ta_0 \subseteq \kk_0$ be a Cartan subalgebra containing $ix.$ Then $\ta \subseteq \lev$. For any subspace $\mathfrak{f}$, which is stable under $ad(\ta)$, let $\Delta(\mathfrak{f},\ta)=\{\alpha_1,\dots,\alpha_r\}$ be the roots of $\ta$ occurring in $\mathfrak{f}$. We allow multiplicities in the set $\Delta(\mathfrak{f},\ta)$.
Define
$$\rho(\mathfrak{f})= \frac{1}{2} \sum\limits_{\alpha_i \in \Delta(\mathfrak{f})} \alpha_i.$$
Let $L \subseteq G$ be the connected subgroup of $G$ with Lie algebra $\lev_0$. A representation $\lambda: \lev \rightarrow \C$ is called admissible if
\begin{itemize}
\item $\lambda$ is a differential of a unitary character (also denoted by $\lambda$) of $L$.
\item If $\alpha \in \Delta(\uu)$, then $\langle \alpha, \lambda\vert_{\ta} \rangle \geq 0.$
\end{itemize}
Given a $\theta$-stable parabolic subalgebra $\q$ and an admissible $\lambda$, define 
$$\mu(\q,\lambda)= \text{Representation of $K$ of highest weight } \lambda\vert_{\ta}+2\rho(\uu \cap \p).$$
We have the following classification result:

\begin{thm}[\cite{Vo-Zu} Theorem 5.3]
Let $\q$ be a $\theta$-stable parabolic subalgebra and let $\lambda: \lev \rightarrow \C$ be an admissible character. Then there is a unique irreducible $\g$-module $A_{\q}(\lambda)$ such that:
\begin{enumerate}
\item The restriction of $A_{\q}(\lambda)$ to $\kk$ contains $\mu(\q,\lambda).$
\item The center $Z(\g)$ of the universal enveloping algebra acts by $\chi_{\lambda+\rho}$ on $A_{\q}(\lambda)$.
\item If a representation of highest weight $\delta$ of $\kk$ appears in the restriction of $A_{\q}(\lambda)$, then 
$$\delta = \lambda\vert_\ta + 2\rho(\uu \cap \p) + \sum\limits_{\alpha \in \Delta(\uu \cap \p)} n_{\alpha}\alpha,$$ with $n_\alpha$'s non-negative integers.
\end{enumerate}
\end{thm}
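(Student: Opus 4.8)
The plan is to construct $A_{\q}(\lambda)$ by \emph{cohomological induction} and then to extract properties (1)--(3) and uniqueness from the general machinery of derived Zuckerman functors. First I would turn $\C_{\lambda}$ into a $(\q, L\cap K)$-module by letting $\uu$ act by zero and $\lev$ act through the (unitary) character $\lambda$; this is consistent because $\lambda$ is the differential of a unitary character of $L$, and the admissibility inequality $\langle\alpha,\lambda\vert_{\ta}\rangle\ge 0$ for $\alpha\in\Delta(\uu)$ will be used further on. Setting $S=\dim(\uu\cap\kk)$, I would define $A_{\q}(\lambda)$ to be the $S$-th derived functor $\mathcal{R}^{S}$ of the Zuckerman functor from $(\q,L\cap K)$-modules to $(\g,K)$-modules applied to $\C_{\lambda}$, with the standard $\rho(\uu)$-normalization built in. The first technical step is then the \emph{vanishing theorem}: $\mathcal{R}^{j}(\C_{\lambda})=0$ for $j\ne S$, which concentrates all the homological algebra in the middle degree and is what makes the resulting object well behaved.

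With the construction in hand, property (2) is the most formal: the Zuckerman functor transports the $Z(\g)$-action to the $Z(\lev)$-action through the Harish-Chandra homomorphism, and since the infinitesimal character of the one-dimensional $\lev$-module $\C_{\lambda}$ is $\lambda+\rho(\lev)$ while normalized cohomological induction adds $\rho(\uu)$, the center $Z(\g)$ acts on $A_{\q}(\lambda)$ by $\chi_{\lambda+\rho(\lev)+\rho(\uu)}=\chi_{\lambda+\rho}$. Properties (1) and (3) are the \emph{bottom-layer $K$-type} statements: restricting $\mathcal{R}^{S}$ to the ``$K$-picture'' and running a Blattner-type argument, one finds that $A_{\q}(\lambda)\vert_{\kk}$ is assembled from $L\cap K$-types induced off $\C_{\lambda}\otimes\wedge^{\mathrm{top}}(\uu\cap\p)$; the bottom such constituent is the $K$-representation of highest weight $\lambda\vert_{\ta}+2\rho(\uu\cap\p)=\mu(\q,\lambda)$, it appears exactly once, and every other $K$-type differs from it by a non-negative integral combination of elements of $\Delta(\uu\cap\p)$. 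This yields (1) and (3) simultaneously, and in particular shows $\mu(\q,\lambda)$ is the unique lowest $K$-type.

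The step I expect to be the main obstacle is irreducibility together with (infinitesimal) unitarity. The admissibility hypothesis places $\lambda$ in the range where the Vogan--Zuckerman irreducibility theorem applies: one compares $A_{\q}(\lambda)=\mathcal{R}^{S}(\C_{\lambda})$ with the cohomologically induced module built from a minimal principal series of $L$ twisted by $\lambda$, uses that $\mathcal{R}^{S}$ preserves irreducibility in the good range, and deduces that $A_{\q}(\lambda)$ is irreducible and nonzero, nonvanishing being immediate once the bottom-layer $K$-type is known to survive. Unitarity is the genuinely hard analytic input: one produces a nondegenerate invariant Hermitian form on the cohomologically induced module and shows it is definite, for instance via a signature/positivity computation using the Dirac operator inequality (equivalently, the Parthasarathy--Enright--Wallach bound), and this is precisely the point at which the inequality $\langle\alpha,\lambda\vert_{\ta}\rangle\ge 0$ is essential.

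Finally, uniqueness. Suppose $X$ is an irreducible $(\g,K)$-module satisfying (1)--(3). By Harish-Chandra's theorem $X$ is determined by its underlying $(\g,K)$-module, and by Vogan's theory of minimal $K$-types a $K$-type of the special shape $\lambda\vert_{\ta}+2\rho(\uu\cap\p)$ that dominates all others (as forced by (3)) must be the unique lowest $K$-type of $X$; combined with the infinitesimal character condition (2), this identifies $X$ with the unique irreducible subquotient --- indeed the Langlands quotient --- of the standard module attached to $(\q,\lambda)$ that contains $\mu(\q,\lambda)$, namely $A_{\q}(\lambda)$. Assembling the existence construction with this uniqueness statement completes the proof.
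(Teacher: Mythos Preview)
The paper does not prove this theorem at all: it is stated as a quotation of Theorem~5.3 of \cite{Vo-Zu} and is used as a black box to classify cohomological representations, with no argument supplied beyond the citation. So there is no ``paper's own proof'' to compare your proposal against.

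That said, your outline is a faithful sketch of how the result is actually established in \cite{Vo-Zu}: construct $A_{\q}(\lambda)$ as the middle-degree derived Zuckerman functor applied to the one-dimensional $(\q,L\cap K)$-module $\C_{\lambda}$, read off the infinitesimal character from the Harish-Chandra homomorphism shift, obtain the $K$-type constraints (1) and (3) from the Blattner/bottom-layer analysis, and invoke the good-range irreducibility and unitarity theorems. One small caution: the theorem as stated here asserts only existence and uniqueness of an irreducible $\g$-module with properties (1)--(3); unitarity is a separate (harder) theorem in \cite{Vo-Zu}, so strictly speaking your unitarity paragraph is extra baggage for the statement at hand, though of course it is the reason these modules matter for the paper. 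Your uniqueness argument via lowest $K$-type plus infinitesimal character is also the right idea and matches the Vogan minimal $K$-type classification used in \cite{Vo-Zu}.
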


This classifies all irreducible unitary cohomological representations of the Lie group $G$. The representation $A_\q(\lambda)$ has non-trivial cohomology with respect to the finite-dimensional representation of $G$ with highest weight $\lambda$.

\begin{rmk} \cite{Vo-Zu} \label{DS}
$A_q(\lambda)$ is a discrete series representation if and only if $\lev \subseteq \kk$. Further, $A_\q$ is a tempered representation if and only if $[\lev,\lev] \subseteq \kk$.
\end{rmk}

\medskip
\subsection{Langlands data for $A_{\q}(\lambda)$}
We obtain the Langlands inducing data for $A_{\q}(\lambda)$'s as follows. For details, the reader is referred to \cite{Vo-Zu}. Fix a maximally split $\theta$-stable Cartan subgroup $H=TA$ of $L$ (corresponding to the Levi part $\lev$ of the $\theta$-stable parabolic subalgebra $\q$) and an Iwasawa decomposition $L=(L\cap K)AN^L.$ Put
\begin{eqnarray*}
MA & = & \text{Langlands decomposition of centralizer of } A \text{ in } G,\\
\nu & = & (\frac{1}{2} \text{ sum of roots of $\mathfrak{a}$ in $\mathfrak{n}^L$})+\lambda\vert_{\mathfrak{a}} \hskip 3mm
 \in \mathfrak{a}^*.
\end{eqnarray*}

Now, let $P$ be any parabolic subgroup of $G$ with Levi factor $MA$ satisfying $\langle Re(\nu),\alpha \rangle \geq 0$ for all roots $\alpha$ of $\mathfrak{a}$ in $\mathfrak{n}^L$. The Harish-Chandra parameter of the discrete series representation of $M$ is given by $\rho^+ + \lambda\vert_{\ta}+\rho(\uu)$; where $\rho^+$ is half sum of positive roots of $\ta$ in $\mathfrak{m} \cap \lev$ and $\rho(\uu)$ is half sum of roots of $\ta$ in $\uu$. We denote this discrete series representation by $\sigma$. The only difficulty here is that if $M$ is not connected then the Harish-Chandra parameter does not completely determine the discrete series representation, $\sigma$, of $M$. We fix this as follows:\\
Let
\begin{eqnarray*}
\mu^{M}(\q,\lambda) & = & \text{Representation of $M \cap K$ of extremal weight } \\
& & \lambda \vert _{\ta} + 2\rho(\wedge^{dim(\uu \cap \p)}(\uu \cap \p))\vert _{\ta}.
\end{eqnarray*}
Let $\sigma$ be the discrete series representation with lowest $M \cap K$ type $\mu^{M}(\q,\lambda)$. This completely determines the discrete series representation of $M$. The parabolic subgroup $P$, the character,$\nu$, of $\mathfrak{a}$ and $\sigma$ the representation of $M$ gives us the Langlands inducing data for $A_{\q}(\lambda)$.

\bigskip
\section{Speh's Classification}
\label{Speh}
We now recall Speh's classification of irreducible, unitary representations of $\GL(n,\R)$ which are cohomological with respect to trivial coefficients. Let $G=\GL(2n,\R), n>1$. Let $C_n=T_nA_n$ be the Cartan subgroup containing matrices of the form:\\
$$\begin{pmatrix}
cos \phi_1 & sin \phi_1 & & &  \\
-sin \phi_1 & cos \phi_1 & & &  \\
 & & \ddots & &  \\
 & &          & cos \phi_n & sin \phi_n\\
 & &          & -sin \phi_n & cos \phi_n
\end{pmatrix}
\begin{pmatrix}
a_1 & & & &  \\
& a_1 & & &  \\
 & & \ddots & &  \\
 & &          & a_n & \\
 & &          & & a_n
\end{pmatrix}.$$

Then the roots system $\Phi(\mathfrak{c_n},\g)$ is of type $A_{n-1}$ with each root occurring $4$ times. Let $\Phi^+$ be the set of positive roots and set $\rho_n=\sum\limits_{\alpha \in \Phi^+}2\alpha$.\newline Let $P=M_nA_nN$ be the parabolic subgroup determined by the set of positive roots $\Phi^+$. The connected component $M_n^\circ$ of $M_n$ is isomorphic to $n$ copies of $SL(2,\R)$ and $T_n$ is isomorphic to a product of $n$ copies of $O(2)$.

Let $\chi(k); k > 0$ be the quasi-character of $C_n$ such that the restriction of $\chi$, to each $\SO(2)$ component, is $e^{2\pi ik}$ and the restriction to $A_n$ is $exp(\frac{1}{2}\rho_n)$. Define $$I(k)=J(\chi(k)),$$ where $J(\chi(k))$ is the Langlands quotient of the induced representation  $\Ind_{P}^{\GL(2n)}(\pi(k) \otimes \chi(k))$ and $\pi(k) = D_k \otimes D_k \otimes \dots \otimes D_k$ is a representation of $M = \SL_{\pm}(2,\R)^n$. If $G=\GL(2,\R)$, then put $I(k)$ to be the discrete series representation $D_k$  of $\GL(2,\R)$.

The cohomological representations of $\GL(n,\R)$ are obtained as follows: Let $(n_0,n_1,\dots,n_r)$ be a partition of $n$ with $n_0 \geq 0$ and $n_i = 2 m_i$ for all $1 \leq i \leq r$ and all the $n_i$ are positive. Let $P = MAN$ be the parabolic corresponding to the partition $(n_0,\dots,n_r)$. Then, $$M = \prod\limits_{i=0}^{r} \SL_{\pm}(n_i,\R).$$
Let $k_i = n - \sum\limits_{j=i+1}^{r}n_j - m_i$. Define the induced representation
$$\Ind_P^G( \otimes_{i=1}^r I(k_i) \otimes \chi_{n_0} \otimes \chi_0),$$
where $I(k_i)$ are representations of $\SL_{\pm}(n_i,\R)$ and $\chi_{n_0}$ and $\chi_0$ are trivial representations of $\SL_{\pm}(n_0,\R)$ and $AN$ respectively. Then we have, 

\begin{thm}[see \cite{Sp2}]
The induced representation $$\Ind_P^G( \otimes_{i=1}^r I(k_i) \otimes \chi_{n_0} \otimes \chi_0)$$ is irreducible and classifies all the unitary, irreducible representations of $\GL(n,\R)$ which have cohomology with trivial coefficients.
\end{thm}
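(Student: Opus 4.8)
The plan is to deduce Speh's classification \cite{Sp2} from the Vogan--Zuckerman description \cite{Vo-Zu} of cohomological representations recalled above. Working with trivial coefficients forces the admissible character $\lambda$ in that classification to be trivial: $A_{\q}(\lambda)$ has nonzero $(\g,\kk)$-cohomology precisely against the finite-dimensional representation of highest weight $\lambda$, so $\lambda=0$ here, and conversely by the Vogan--Zuckerman/Borel--Wallach theory every irreducible unitary representation of $\GL(n,\R)$ with $(\g,\kk)$-cohomology against $\C$ is isomorphic to some $A_{\q}(0)$. The proof then splits into four tasks: (i) enumerate the $\theta$-stable parabolic subalgebras $\q\subseteq\mathfrak{gl}(n,\C)$ up to conjugacy by $K=\O(n)$; (ii) run the Langlands-data algorithm recalled above on each $A_{\q}(0)$ and identify the outcome with the induced representation in the statement for a suitable partition $(n_0,n_1,\dots,n_r)$; (iii) show this induced representation is irreducible; and (iv) verify that distinct partitions give inequivalent representations, so that the list is complete and without repetition.

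For (i), a $\theta$-stable parabolic comes from an element $x\in i\kk_0=i\,\mathfrak{so}(n)$. Acting on $\C^{n}$ such an $x$ has eigenvalue $0$ with multiplicity $n_0\ge 0$ and nonzero real eigenvalues occurring in pairs $\pm c$; collecting equal magnitudes yields a partition $n=n_0+n_1+\dots+n_r$ with each $n_i=2m_i$ a positive even number for $i\ge 1$. Up to $K$-conjugacy the centralizer $L$ of $x$ in $\GL(n,\R)$ is $\GL(n_0,\R)\times\prod_{i=1}^{r}\GL(m_i,\C)$, with $\q=\lev+\uu$ the sum of the non-negative $\operatorname{ad}(x)$-eigenspaces, so $\q$ is determined up to $K$-conjugacy by the partition $(n_0,n_1,\dots,n_r)$ (with an ordering convention on the parts with $i\ge1$). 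This produces exactly the combinatorial data in Speh's statement.

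For (ii), the heart of the matter is the single-block case: a $\theta$-stable parabolic of $\GL(2m,\R)$ whose Levi is of type $\GL(m,\C)$. Applying the recipe recalled above, one has $\nu=\tfrac12\bigl(\text{sum of roots of }\mathfrak{a}\text{ in }\mathfrak{n}^{L}\bigr)$ since $\lambda|_{\mathfrak{a}}=0$, while the discrete series $\sigma$ of $M$ has Harish--Chandra parameter $\rho^{+}+\rho(\uu)$ and is pinned down by the lowest $M\cap K$-type $\mu^{M}(\q,0)$; a direct computation shows this reproduces Speh's normalized character $\chi(k)$ and the representation $\pi(k)=D_{k}\otimes\dots\otimes D_{k}$, so that $A_{\q}(0)\cong I(k)$ for the appropriate $k$. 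In general $\uu$ decomposes compatibly with the blocks, the algorithm delivers the real parabolic $P=MAN$ attached to the partition $(n_0,\dots,n_r)$ with $M=\prod_{i=0}^{r}\SL_{\pm}(n_i,\R)$, and $A_{\q}(0)$ is the unitary parabolic induction from $MA$ of $\bigotimes_{i=1}^{r}I(k_i)\otimes\chi_{n_0}\otimes\chi_0$, the shifts $k_i=n-\sum_{j>i}n_j-m_i$ being forced by the requirement that the infinitesimal character equal $\rho$ (that of the trivial representation). Then (iii) is immediate: the classification theorem above asserts that $A_{\q}(0)$ is irreducible, hence so is the induced representation once the identification of (ii) is in hand. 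For (iv), inequivalent partitions give non-$K$-conjugate $\q$'s, and one separates the corresponding $A_{\q}(0)$'s by their lowest $K$-types $\mu(\q,0)$, of highest weight $2\rho(\uu\cap\p)$ --- the infinitesimal characters all coincide with $\rho$, so $K$-type data is what must be used --- while permuting the parts with $i\ge1$ only permutes the tensor factors and leaves the induced representation unchanged.

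The main obstacle is step (ii): matching the intrinsic Vogan--Zuckerman Langlands data --- the choice of $P$ with $\langle\operatorname{Re}\nu,\alpha\rangle\ge0$, the character $\nu\in\mathfrak{a}^{*}$, and the discrete series $\sigma$ of the possibly disconnected $M$ --- with Speh's explicitly normalized characters $\chi(k_i)$ and Speh modules $\pi(k_i)$, and in particular verifying the precise form of the shifts $k_i=n-\sum_{j>i}n_j-m_i$. Closely tied to this is the bookkeeping of determinant twists, i.e.\ the passage between $\GL(n_i,\R)$ and $\SL_{\pm}(n_i,\R)$ and the absorption of the central $\R_{>0}$-direction into $AN$; this is delicate but affects only the normalization of the building blocks $I(k_i)$, not the underlying representation. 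Everything else --- the enumeration of $\theta$-stable parabolics and the final irreducibility and rigidity statements --- is then formal.
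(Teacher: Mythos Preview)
The paper does not prove this theorem at all: it is stated with the attribution ``[see \cite{Sp2}]'' and used as a black box, with only the Langlands inducing data recorded afterwards (again by citing \cite{Sp2}, Proposition~4.1.1). So there is nothing to compare your argument against in the paper itself.

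Your proposal is nonetheless a reasonable outline of how one would re-derive Speh's result from the Vogan--Zuckerman machinery, and it is more than the paper attempts. Two cautions are worth flagging. First, the Vogan--Zuckerman framework as recalled here is for connected semisimple groups with finite center; $\GL(n,\R)$ is neither connected nor semisimple, so one must either pass to $\SL_{\pm}(n,\R)$ and keep track of the center and components, or invoke the extension of the theory to reductive groups --- you gesture at this in the ``bookkeeping of determinant twists'' remark, but it is not purely cosmetic. Second, in step~(iv) it is not enough that distinct partitions give non-$K$-conjugate $\q$'s: as the paper itself notes (the equivalence $\q_5\sim\q_6$, $\q_2\sim\q_7$), different $\theta$-stable parabolics can yield isomorphic $A_\q(0)$'s whenever $\Delta(\uu\cap\p)$ coincides, so you must check non-equivalence in that finer sense or, as you do, separate the representations directly by their lowest $K$-types $2\rho(\uu\cap\p)$. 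With those points addressed, your sketch is sound; but strictly speaking the paper's ``proof'' is the citation, and nothing more.
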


For the purposes of our computations, it will be convenient for us to write down the Langlands inducing data for these representations.

With notations as above, choose a Cartan subgroup $C_{(n-n_0)/2}$ in $MA$ with the following properties:
\begin{itemize}
\item $C_{(n-n_0)/2} \cap \SL_{\pm}(n_l,\R)$ is the fundamental Cartan subgroup of $\SL_{\pm}(n_l,\R)$ for $l \geq 1$, and 
\item $C_{(n-n_0)/2} \cap \SL_{\pm}(n_0,\R)$ is the split Cartan subgroup of $\SL_{\pm}(n_0,\R)$.
\end{itemize}

Then we can decompose $C_{(n-n_0)/2}$ as $T_{(n-n_0)/2}A_{(n-n_0)/2}$ with the following properties:
\begin{itemize}
\item $T_{(n-n_0)/2} = \prod\limits_{l=0}^{r}T_{n_l/2}$ with $T_{n_l/2} = T_{(n-n_0)/2} \cap \SL_{\pm}(n_l,\R)$ for $l \geq 0$, and 
\item $A_{(n-n_0)/2} = A\prod\limits_{l=0}^{r}A_{n_l}$ with $A_{n_l} = A_{(n-n_0)/2} \cap \SL_{\pm}(n_l,\R)$.
\end{itemize}

Choose a cuspidal parabolic subgroup $Q$ containing $C_{(n-n_0)/2}$ and the upper triangular matrices and write, for $0 \leq l \leq r$, $2\rho_l$ for the sum of positive roots of $(\mathfrak{sl}(n_l,\R),\mathfrak{a}_l)$ for the sum of positive roots determined by $Q$. Let $\chi(n) \in \hat{C}_{(n-n_0)/2}$ be such that the following holds:
\begin{itemize}
\item $\chi(n)|_{A} = \chi_0$,
\item $\chi(n)|_{A_0} = \rho_0$
\item $\chi(n)|_{T_0}$ is trivial
\item $\chi(n)|_{A_l} = \frac{1}{2}\rho_l$ for $l \rangle $,
\item $\chi(n)|_{T_l}$ is a product of factors $exp((n-\sum\limits_{i=l+1}n_i - m_l)2\pi i)$.
\end{itemize}

Then, by \cite{Sp2} Proposition 4.1.1,  $\Ind_P^G( \otimes_{i=1}^r I(k_i) \otimes \chi_{n_0} \otimes \chi_0) \cong J(\chi(n))$.

\bigskip
\section{$\SL(2,\R)$ to $\GL(3,\R)$}
\label{SL2}
In this section, as a warm-up example we will study the cohomological properties of representations of $\GL(3,\R)$ which are obtained by transferring $A_{\q}(\lambda)$'s of $\SL(2,\R)$. We denote by $\mathfrak{sl}(2,\C)$ the complexified Lie algebra of $\SL(2,\R)$.
Let $H = \left( \begin{smallmatrix}
1 & 0 \\
0 & -1
\end{smallmatrix} \right), X = \left( \begin{smallmatrix}
0 & 1 \\
0 & 0 
\end{smallmatrix} \right)$ and
$Y = \left( \begin{smallmatrix}
0 & 0 \\
1 & 0 
\end{smallmatrix} \right)$ be a basis of $\mathfrak{sl}(2,\R)$.

Let $w = \left( \begin{smallmatrix}
0 & 1 \\
1 & 0
\end{smallmatrix} \right)$, $\phi(A) = -{}^{t}A$ and $\theta = int(w) \circ \phi$, where $int(w)$ is the inner automorphism by $w$. Then $\theta$ is a Cartan involution on $\mathfrak{sl}(2,\C)$ such that
$\mathfrak{sl}(2,\C) = \langle H \rangle \oplus \langle X, Y \rangle,$ is the Cartan decomposition of $\mathfrak{sl}(2,\C)$ with $\mathfrak{k} = \langle H \rangle $ and $\mathfrak{p} = \langle X, Y \rangle$.

There are three $\theta$-stable parabolic subalgebras of $\mathfrak{sl}(2,\C)$ corresponding to $0$, $H$ and $-H$.
\begin{enumerate}
\item  Corresponding to $0$: This gives the full algebra of $q_0 = \mathfrak{sl}(2,\C) = \lev$.

\item Corresponding to $H$: The parabolic subalgebra is 
$$\mathfrak{q}_1 = \langle H \rangle \oplus \langle X \rangle,$$ where $\lev = \langle H \rangle$ and $\mathfrak{u} = \langle X \rangle. $
\item Corresponding to $-H$: The parabolic subalgebra is 
$$\mathfrak{q}_2 = \langle H \rangle \oplus \langle Y \rangle,$$ where $\lev = \langle H \rangle$ and $\mathfrak{u} = \langle Y \rangle.$
\end{enumerate}

Note that the only possible admissible character $\lambda$ for $\q_0$ is $\lambda = 0.$ This gives rise to the trivial representation of $\SL(2,\R)$. This representation is transferred to the trivial representation of $\GL(3,\R)$ which is cohomological with respect to the trivial coefficients.
Observe that the Levi parts of both $\q_1$ and $\q_2$ are contained in $\mathfrak{k}$. Thus the cohomological representations $A_{\mathfrak{q}_1}(\lambda)$ and $A_{\mathfrak{q}_2}(\lambda)$ are discrete series representations with highest weight $\lambda$. The Langlands parameter for a representation of $\SL(2,\R)$ is a homomorphism from the Weil group of $\R$ to $\PGL(2,\C)$. The parameter $\phi(D_n)$ for the discrete series representation of $\SL(2,\R)$ is given by
$$z \mapsto \begin{pmatrix}
(\frac{z}{\bar{z}})^\frac{n}{2} & 0 \\
0 & 1
\end{pmatrix}, \hskip 5mm j \mapsto \begin{pmatrix}
0 & 1 \\
1 & 0
\end{pmatrix}.$$

To compute the transfer of the discrete series representations of $\SL(2,\R)$ to $\GL(3,\R)$, we embed $\PGL(2,\C)$ into $\GL(3,\R)$ via the $3$- dimensional representation induced by $\GL(2,\C)$ taking 
$$\begin{pmatrix}
a & 0 \\
0 & b
\end{pmatrix} \mapsto \begin{pmatrix}
\frac{a}{b} & 0 & 0 \\
0 & 1 & 0 \\
0 & 0 & \frac{b}{a}
\end{pmatrix}.$$ 
The image of $\PGL(2,\C)$ can be identified with $\SO(3) \subset \GL(3,\C)$ which preserves the quadratic form $\begin{pmatrix}
0 & 0 & 1 \\
0 & -1 & 0 \\
1 & 0 & 0
\end{pmatrix}$.

Thus, one observes that the transfer of a discrete series representation of $\SL(2,\R)$, with highest weight $n$, to $\GL(3,\R)$ has Langlands parameter 
$$z \mapsto \begin{pmatrix}
(\frac{z}{\bar{z}})^\frac{n}{2} & 0 & 0 \\
0 & 1 & 0 \\
0 & 0 & (\frac{z}{\bar{z}})^{-\frac{n}{2}}
\end{pmatrix}; \hskip 20mm j \mapsto \begin{pmatrix}
0 & 0 & 1 \\
0 & -1 & 0 \\
1 & 0 & 0
\end{pmatrix}.$$

We know that this corresponds to a cohomological representation of $\GL(3,\R)$ which is cohomological with respect to the finite dimensional representation with highest weight $(n,0,-n)$. We have already seen that the transfer of $M_n$, the finite dimensional representation of $\SL(2,\R)$ with highest weight $n$, transfers to the finite dimensional representation of $\GL(3,\R)$ with highest weight $(n,0,-n)$. Thus we have the following result:

\begin{prop}
Let $\pi$ be an irreducible unitary cohomological representation of $\SL(2,\R)$ with respect to the finite dimensional representation $M$. Then the representation of $\GL(3,\R)$, $\iota(\pi)$, obtained by the Langlands transfer is cohomological with respect to $\iota(M)$.
\end{prop}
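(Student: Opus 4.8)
The plan is to exhaust the (very short) list of cohomological representations of $\SL(2,\R)$ and check the claim case by case, using the explicit Langlands parameter computations already set up in this section. Concretely, by the Vogan–Zuckerman classification applied to $\mathfrak{sl}(2,\C)$, the only $\theta$-stable parabolic subalgebras are $\q_0, \q_1, \q_2$ described above, and hence every irreducible unitary cohomological representation of $\SL(2,\R)$ is either the trivial representation (from $\q_0$, forced $\lambda=0$) or a discrete series representation $A_{\q_1}(\lambda)=A_{\q_2}(\lambda)=D_n$ with highest weight $\lambda=n$. So there are exactly two families to handle.

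First I would dispose of the trivial representation: its functorial transfer to $\GL(3,\R)$ is the trivial representation (the Langlands parameter is the trivial parameter composed with the inclusion $\SO(3,\C)\hookrightarrow\GL(3,\C)$), and the trivial representation of $\GL(3,\R)$ is cohomological with respect to trivial coefficients — equivalently with respect to $\iota(M)$ when $M$ is trivial, since $\iota$ of the trivial representation of $\SL(2,\R)$ is the trivial representation of $\GL(3,\R)$. Then for a discrete series $D_n$, I would assemble the pieces already in the text: (i) the Langlands parameter of $D_n$ is $z\mapsto \mathrm{diag}((z/\bar z)^{n/2},1)$, $j\mapsto \left(\begin{smallmatrix}0&1\\1&0\end{smallmatrix}\right)$ into $\PGL(2,\C)$; (ii) composing with the symmetric-square-type embedding $\PGL(2,\C)\cong\SO(3,\C)\hookrightarrow\GL(3,\C)$ gives $z\mapsto \mathrm{diag}((z/\bar z)^{n/2},1,(z/\bar z)^{-n/2})$, $j\mapsto \left(\begin{smallmatrix}0&0&1\\0&-1&0\\1&0&0\end{smallmatrix}\right)$; (iii) by the Langlands classification for $\GL(3,\R)$, this parameter is the parameter of an irreducible unitary representation $\iota(D_n)$ whose infinitesimal character matches that of the finite-dimensional representation of $\GL(3,\R)$ with highest weight $(n,0,-n)$, so by the standard cohomology-vanishing/non-vanishing criterion (infinitesimal character condition plus Speh's classification, which tells us exactly which irreducible unitary representations of $\GL(3,\R)$ are cohomological) $\iota(D_n)$ is cohomological with respect to that finite-dimensional representation; (iv) finally identify that finite-dimensional representation with $\iota(M_n)$, using the already-observed fact that the $\SL(2,\R)$-representation $M_n$ of highest weight $n$ transfers to the $\GL(3,\R)$-representation of highest weight $(n,0,-n)$ under the same embedding.

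The only real content beyond bookkeeping is step (iii): arguing that the explicit Langlands parameter actually names a cohomological representation of $\GL(3,\R)$. I expect this to be the main obstacle, and I would handle it by matching the parameter against Speh's list — for $\GL(3,\R)$ the relevant cohomological representation with respect to the finite-dimensional representation of highest weight $(n,0,-n)$ is the (Langlands quotient of the) representation induced from $D_n\otimes\chi$ on the $(2,1)$-parabolic with the appropriate twist, and one checks that its Langlands parameter is exactly the one computed in (ii). This reduces everything to comparing two explicit parameters, which is a finite check. The conclusion then follows by combining all cases, and since the trivial case and the discrete series case both confirm the statement, the proposition holds.
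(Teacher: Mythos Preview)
Your proposal is correct and mirrors the paper's argument exactly: case-split via Vogan--Zuckerman into the trivial representation and the discrete series $D_n$, push the explicit Langlands parameters through $\PGL(2,\C)\cong\SO(3,\C)\hookrightarrow\GL(3,\C)$, and recognise the resulting $\GL(3,\R)$-parameter as that of a cohomological representation with coefficients of highest weight $(n,0,-n)=\iota(M_n)$. The only caveat is your appeal to Speh in step~(iii): the classification recalled in Section~\ref{Speh} is explicitly for \emph{trivial} coefficients, and the paper later stresses that no analogue for non-trivial coefficients is invoked---the paper simply asserts the identification as ``known'' (it follows from Vogan--Zuckerman applied directly to $\GL(3,\R)$, or from \cite{labesse-schwermer,Ra}), so you should cite one of those sources rather than Speh.
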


\begin{rmk}
Note that the only cohomological representations of $\SL(2,\R)$ are the discrete series representations and the trivial representation.
\end{rmk}

\bigskip
\section{Vogan-Zuckermann classification for $\Sp(4,\R)$}
\label{V-Z for Sp}
\subsection{$\theta$-stable subalgebras for $\Sp(2n,\R)$}
We will parameterize the $\theta$-stable parabolic subalgebras of $\Sp(2n,\R)$.
We have the following result to aid us in listing all the $\theta$-stable subalgebras of $\Sp(4,\R).$
\begin{lemma}
\label{Clas-theta-parabolics}
The following sets are in $1-1$ correspondence:
\begin{enumerate}
\item \{open, polyhedral root cones in $i\mathfrak{h}/W_K$ \}
\item \{ordered partitions of $n$: $n=\sum\limits_{j=1}^{s}(n_j+m_j)+m$ with $n_j,m_j,m,s \geq 0, n_j+m_j > 0$\}
\end{enumerate}
\end{lemma}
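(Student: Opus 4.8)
The plan is to unwind both sides into explicit combinatorics and write the bijection down by hand. First I would fix the standard identification $i\mathfrak{h}_0 \cong \R^n$, $x \mapsto (x_1,\dots,x_n)$, under which (as recalled above) $W_K$ acts as the permutation group $S_n$ on the coordinates, and the roots of $(\mathfrak{sp}(2n,\C),\mathfrak{h})$ are the functionals $\pm 2e_i$ and $\pm(e_i \pm e_j)$ with $i\neq j$, where $e_i(x)=x_i$. Their zero sets are the hyperplanes $\{x_i = 0\}$, $\{x_i = x_j\}$ and $\{x_i = -x_j\}$, i.e.\ the Coxeter arrangement of type $C_n$. An open polyhedral root cone is then a relatively open face of this arrangement, i.e.\ a nonempty maximal subset of $i\mathfrak{h}_0$ on which $\mathrm{sgn}\langle\alpha,x\rangle$ is constant for every root $\alpha$; this is precisely an equivalence class of elements $x$ defining the same $\theta$-stable pair $(\mathfrak{q},\mathfrak{l})$, so the lemma amounts to describing these faces up to $W_K$. (That one may take $x$ in $i\mathfrak{h}_0$ rather than a general element of $i\mathfrak{k}_0$ is the usual reduction coming from the fact that $\mathfrak{h}_0$ is a Cartan subalgebra of $\mathfrak{k}_0 \cong \mathfrak{u}(n)$, which I would only recall.)

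The crux is the observation that the face of a point $x$ is recorded exactly by the total preorder it induces on the negation-symmetric configuration $S(x) := \{x_1,\dots,x_n\}\cup\{-x_1,\dots,-x_n\}\cup\{0\}$ of $2n+1$ real numbers: knowing $\mathrm{sgn}(x_i)$, $\mathrm{sgn}(x_i - x_j)$ and $\mathrm{sgn}(x_i + x_j)$ for all $i,j$ is the same as knowing which elements of $S(x)$ coincide and in what order (for the last family, note that $\mathrm{sgn}(x_i+x_j)$ is the comparison of $x_i$ with $-x_j$). Such a preorder groups $S(x)$ into value-blocks, linearly ordered and stable under negation: a central block $B_0 \ni 0$ and, for the distinct positive values $v_1 > \dots > v_s$ occurring, a mirror pair $\{B_{v_j}, B_{-v_j}\}$. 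Passing to $i\mathfrak{h}_0/W_K$ discards the labelling of the indices and retains only, for each block, how many of the coordinates sit in it; so to the orbit of $x$ I attach $m := \#\{i : x_i = 0\}$ and, for $1 \le j \le s$, $n_j := \#\{i : x_i = v_j\}$ and $m_j := \#\{i : x_i = -v_j\}$. Since every index $i$ contributes exactly one coordinate $x_i$, which lies in $B_0$ or in exactly one $B_{\pm v_j}$, we get $\sum_{j=1}^s(n_j+m_j)+m = n$; the block $B_{v_j}$ (equivalently its mirror) being nonempty is exactly $n_j + m_j > 0$; and $m$ is unconstrained. Conversely, from such an ordered partition one builds a representative $x$ by choosing any reals $v_1 > \dots > v_s > 0$ and distributing the coordinates accordingly, and any two choices induce the same preorder, hence lie in the same $W_K$-orbit of faces; the two assignments are visibly mutually inverse, which is the asserted correspondence.

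The step needing the most care is purely bookkeeping: checking in both directions that a relatively open face is determined by, and determines, the symmetric preorder, and then that the invariant surviving the $S_n$-quotient is exactly the ordered list of pairs $(n_j, m_j)$ together with the scalar $m$, with precisely the stated inequalities. Once the conventions are pinned down — indexing the positive values in decreasing order so that $v_1$ is the largest, and observing that negation interchanges $(n_j,m_j) \leftrightarrow (m_j,n_j)$ between a block and its mirror, so that only one member of each pair is recorded — well-definedness and mutual inverseness are routine. I would also note, with an eye to the explicit description of the $A_{\mathfrak{q}}(\lambda)$'s to follow, that the Levi then reads off from the data as $\mathfrak{l} \cong \bigoplus_{j=1}^s \mathfrak{gl}(n_j + m_j, \C) \oplus \mathfrak{sp}(2m, \C)$ (within a mirror pair the centralizing roots form a type-$A$ system of rank $n_j + m_j - 1$, and $B_0$ contributes the $\mathfrak{sp}$-factor), although this is a consequence of the bijection rather than an ingredient in its proof.
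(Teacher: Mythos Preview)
Your argument is correct and is essentially the same as the paper's, only more carefully articulated. The paper's proof is a one-liner: it uses the $S_n$-action to put $x$ in weakly decreasing form, replaces the distinct values by integers $s,s-1,\dots,1,0,-1,\dots,-s$, and reads off the block sizes $(n_j,m_j,m)$ directly; you do the same thing via the language of faces of the type-$C_n$ Coxeter arrangement and the symmetric preorder on $\{x_i\}\cup\{-x_i\}\cup\{0\}$, which is just a more explicit way of saying that a cone is determined by the sign pattern of all the roots. Your added remark on the shape of the resulting Levi is a useful sanity check but, as you note, not part of the bijection itself.
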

\begin{proof}
Let $x=(x_1,\dots,x_n) \in i\mathfrak{h}/W_K$. Since $W_K$ acts by permuting the coordinates of $x$, we can assume that $x_1 \geq x_2 \geq \dots \geq x_r > 0 > x_{r+1} \geq x_{r+2} \geq \dots \geq x_n$.  This can also be expressed as follows:
$$x=(\underbrace{s,\dots,s}_{n_s},\underbrace{s-1,\dots,s-1}_{n_{s-1}},\dots,\underbrace{1,\dots,1}_{n_1},\underbrace{0,\dots,0}_{m},\underbrace{-1,\dots,-1}_{m_1},\dots,\underbrace{-s,\dots,-s}_{m_s}),$$ with $n=\sum\limits_{j=1}^{s}(n_j+m_j)+m$ with $n_j,m_j,m,s \geq 0, n_j+m_j > 0.$ This gives us a bijection between the two sets above.
\end{proof}

Let $\mathfrak{Q}$ be the set of all $\theta$-stable parabolic subalgebras of $\g.$ The group $K$ acts on the set $\mathfrak{Q}$ via the adjoint action due to which we get a finite set of $\theta$-stable parabolic subalgebras $\mathfrak{Q}/K$. The following lemma gives us a bijection between $\mathfrak{Q}/K$ and open polyhedral root cones in $i\mathfrak{h}/W_K$.

\begin{lemma}
\label{char-theta-sta-subalg}
Every $x \in i\mathfrak{h}/W_K$ defines a $\theta$-stable parabolic subalgebra $\q_x$ by setting \linebreak $\q_x = \lev_x + \uu_x$, where $$\lev_x = \mathfrak{h} \oplus \bigoplus\limits_{\alpha \in \Delta(\g,\mathfrak{h}),\alpha(x)=0}\g_{\alpha}; \hskip 5mm
\uu_x = \bigoplus\limits_{\alpha \in \Delta(\g,\mathfrak{h}),\alpha(x)>0} \g_\alpha.$$
Two $\theta$-stable parabolic subalgebras $\q_x,\q_y$ are equal if and only if $x$ and $y$ are in the same open polyhedral root cone.\\
\noindent
Conversely, up to conjugacy be $K$, any $\theta$-stable parabolic subalgebra $\q$ is $\q=\q_x$ for some $x \in i\mathfrak{h}/W_K$.
\end{lemma}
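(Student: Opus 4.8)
The plan is to leverage the two previous results and a standard structural fact: a $\theta$-stable parabolic $\q$ is, by the Vogan--Zuckerman construction recalled in Section \ref{V-Z Classification}, determined by an element $x \in i\kk_0$ via the recipe ``$\q_x =$ sum of non-negative eigenspaces of $\mathrm{ad}(x)$.'' Since any element of $i\kk_0$ is $K$-conjugate into the fixed Cartan $i\mathfrak{h}$, and since the roots $\Delta(\g,\mathfrak{h})$ that appear in $\uu_x$ (resp.\ $\lev_x$) are precisely those $\alpha$ with $\alpha(x)>0$ (resp.\ $\alpha(x)=0$), the explicit formulas for $\lev_x$ and $\uu_x$ in the statement are just an unwinding of that recipe once $x$ has been moved into $i\mathfrak{h}$. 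So the first step is to record that every $\theta$-stable parabolic is $K$-conjugate to some $\q_x$ with $x\in i\mathfrak{h}/W_K$ — this is the ``conversely'' clause — and that $\q_x$ so defined is genuinely a parabolic with Levi decomposition $\lev_x + \uu_x$; both follow from the general discussion in Section \ref{V-Z Classification} applied with $\ta_0 = \mathfrak{h}_0$.

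The real content is the ``if and only if'': $\q_x = \q_y$ exactly when $x,y$ lie in the same open polyhedral root cone of $i\mathfrak{h}/W_K$. First I would make the notion precise: the hyperplanes $\{\alpha = 0\}$ for $\alpha \in \Delta(\g,\mathfrak{h})$ partition $i\mathfrak{h}$ into open polyhedral cones (the regions on which the sign of every root is constant), and these descend to $i\mathfrak{h}/W_K$ since $W_K$ permutes the coordinates hence permutes the roots. For the forward direction, if $x,y$ are in the same cone then $\mathrm{sign}\,\alpha(x) = \mathrm{sign}\,\alpha(y)$ for every root $\alpha$, so the index sets $\{\alpha : \alpha(x) \gtreqless 0\}$ and $\{\alpha : \alpha(y)\gtreqless 0\}$ coincide, whence $\lev_x = \lev_y$, $\uu_x = \uu_y$, and $\q_x = \q_y$. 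For the converse, if $\q_x = \q_y$ then comparing the $\mathfrak{h}$-weight spaces shows $\{\alpha : \alpha(x) \geq 0\} = \{\alpha : \alpha(y)\geq 0\}$, and since $\Delta$ is symmetric ($-\alpha$ is a root whenever $\alpha$ is) this also forces $\{\alpha : \alpha(x)=0\} = \{\alpha : \alpha(y)=0\}$ and $\{\alpha : \alpha(x)>0\} = \{\alpha:\alpha(y)>0\}$; hence every root has the same sign at $x$ and at $y$, so they lie in the same cone. A small point to be careful about is the passage to the quotient by $W_K$: one must check that $\q_x$ and $\q_{wx}$ are $K$-conjugate for $w \in W_K$ (true since $W_K$ is realized by elements of $N_K(\mathfrak{h})$), so that $\q_x$ is genuinely a function of the class of $x$ in $i\mathfrak{h}/W_K$, and conversely that if $\q_x$ and $\q_y$ are $K$-conjugate then after adjusting $y$ within its $W_K$-orbit we may assume $\q_x = \q_y$ on the nose — this uses that any two maximally compact $\theta$-stable Cartan subalgebras of a Levi are conjugate.

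I expect the main obstacle to be bookkeeping rather than depth: precisely matching the ``open polyhedral root cone'' language with the sign-vector description of $x$, and handling the $W_K$-quotient cleanly so that the correspondence with $\mathfrak{Q}/K$ (not just with parabolics containing a fixed $\mathfrak{h}$) is correct. Combined with Lemma \ref{Clas-theta-parabolics}, which identifies the cones with ordered partitions, this will give the explicit parametrization of $\mathfrak{Q}/K$ used in the next section to enumerate the cohomological representations of $\Sp(4,\R)$.
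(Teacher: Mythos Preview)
Your proposal is correct and complete in its essentials. Note, however, that the paper does not actually supply a proof of this lemma: it is stated as a standard structural fact (implicitly drawn from \cite{Vo-Zu}) and then used to enumerate the $\theta$-stable parabolics of $\mathfrak{sp}(4)$. So there is no ``paper's own proof'' to compare against; your argument via sign patterns of roots and $K$-conjugacy into the fixed Cartan is exactly the standard justification, and would serve well if one wanted to include a proof.
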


Two $\theta$-stable parabolic subalgebras, $\q_1,\q_2$ are said to be equivalent if $\Delta(\uu_1 \cap \p) = \Delta(\uu_2 \cap \p)$, i.e. if the non-compact parts in the unipotent radical of the parabolic subalgebras are equal. We list down all the relevant data for $\Sp(4,\C)$.

\subsection{Parabolic subgroups of $G=\Sp(4,\R)$} 
The parabolic subgroup is one of the components in the Langlands inducing data for a representations. To compute the Langlands parameter for $A_{\q}(\lambda)$, we will need to realize $A_{\q}(\lambda)$ as a Langlands quotient of an induced representation. Thus it will be important for us to list down the parabolic subgroups of $\Sp(4,\R)$. The parabolic subgroups containing a Borel subgroup are in bijection with the subsets of a base corresponding to the Borel (see \cite{Spr}).

For $\Sp(4,\R)$, the root system is $\{e_1-e_2,2e_2 \}$. There are $4$ parabolic subgroups of $\Sp(4,\R)$, each corresponding to a subset of the base. One of them is the group itself which corresponds to the full base. This leaves $3$ proper parabolic subgroups of $\Sp(4,\R)$ which are:
\begin{enumerate}
\item \textbf{Minimal parabolic:} The Borel subgroup $B=M_0A_0N_0$, corresponding to the empty subset of the base, with \\ $M_0=\{\begin{pmatrix}
\epsilon_1 & 0 & 0 & 0\\
0 & \epsilon_2 & 0 & 0\\
0 & 0 & \epsilon_1 & 0\\
0 & 0 & 0 & \epsilon_2\\
\end{pmatrix}: \epsilon_i \in \{\pm 1\} \},$\\
$A_0=\{ \text{diag}(a,b,a^{-1},b^{-1}): a,b \in \mathbb{R}_{> 0}^{\times} \},$ and\\
$N_0 = \{ n(x_0,x_1,x_2,x_3) = \begin{pmatrix}
1 & 0 & x_1 & x_2 \\
0 & 1 & x_2 & x_3 \\
0 & 0 & 1 & 0 \\
0 & 0 & 0 & 1 \\
\end{pmatrix} \begin{pmatrix}
1 & x_0 & 0 & 0 \\
0 & 1 & 0 & 0 \\
0 & 0 & 1 & 0 \\
0 & 0 & -x_0 & 1 \\
\end{pmatrix} \} \subset \Sp(4).$
\item \textbf{Siegel parabolic:} The Siegel parabolic $P_S = M_SA_SN_S,$ corresponding to the subset $\Sigma=\{e_1 - e_2\}$ of the base, with\\
$M_S=\{\begin{pmatrix}
m & 0 \\
0 & {}^tm^{-1}
\end{pmatrix}: m \in SL^{\pm}(2,\mathbb{R}) \},$\\
$A_S=\{ \text{diag}(a,a,a^{-1},a^{-1}): a > 0 \},$ and\\
$N_S = \{ \begin{pmatrix}
1_2 & x \\
0 & 1_2
\end{pmatrix}: x={}^tx \in M_2(\R) \}.$

\item \textbf{Jacobi Parabolic:} The Jacobi parabolic $P_J = M_JA_JN_J$, corresponding to the subset $\Sigma=\{2e_2 \}$ of the base, with\\
$M_J=\{\begin{pmatrix}
\epsilon & 0 & 0 & 0\\
0 & a & 0 & b\\
0 & 0 & \epsilon & 0\\
0 & c & 0 & d\\
\end{pmatrix} : \begin{pmatrix}
a & b\\
c & d\\
\end{pmatrix} \in SL(2,\mathbb{R}), \epsilon = \pm 1 \}$\\
$A_J=\{ \text{diag}(a,1,a^{-1},1): a \in \mathbb{R}_{> 0}^{\times} \},$ and\\
$N_J = \{ n(x_0,x_1,x_2,0): x_i \in \R \}.$
\end{enumerate}

We will need these parabolics when we compute the Langlands parameters for the $A_{\q}(\lambda)$'s.

\subsection{$\theta$-stable parabolic subalgebras of $\mathfrak{sp}(4)$}
We list all the $\theta$-stable parabolic subalgebras $\q=\lev + \uu$ of $\mathfrak{sp}(4)$ and the possible admissible characters $\lambda: \lev \rightarrow \C$ which can be obtained from a highest weight of $\mathfrak{h}$. We note that a highest weight of $\mathfrak{h}$ can be extended to an admissible character of $\lev$ if and only if $\lambda\vert_{\mathfrak{h}\cap [\lev_0,\lev_0]}$ and $\lambda \vert_{\mathfrak{a}} = 0$ where the subalgebra $\lev_0=\lev \cap \mathfrak{sp}(4,\R)$ (see \cite{Ha-Ra}). Along with the $\theta$-stable parabolic subalgebras and their corresponding admissible characters, we will also simultaneously list down some useful data for each $\theta$-stable parabolic subalgebra, which will come in handy when we compute the Langlands parameters. To make the list we use Lemma \ref{Clas-theta-parabolics} and Lemma \ref{char-theta-sta-subalg}.

\begin{enumerate}
\label{theta-stable algebras}
\item $x=0$ corresponding to the partition $2=2.$\\
The $\theta$-stable parabolic subalgebra corresponding to $x$ is:
$$\mathfrak{q}_1 = \mathfrak{sp}_4(\mathbb{C}) + 0$$
The Levi part is: $\mathfrak{l} =  \mathfrak{sp}_4(\mathbb{C}).$\\
$\mathfrak{l_0} = \mathfrak{l} \cap \mathfrak{sp}_4(\mathbb{R}) =  \mathfrak{sp}_4(\mathbb{R}).$\\
$[\mathfrak{l_0}, \mathfrak{l_0}] = \mathfrak{l_0}.$\\
So $\mathfrak{h} \cap [\mathfrak{l_0},\mathfrak{l_0}] = \mathfrak{h}.$\\
Therefore,  $\lambda$ of $\mathfrak{h}$ can be extended to get an admissible character of $\mathfrak{l}$ if and only if $\lambda = 0.$ This $\theta$-stable parabolic subalgebra corresponds to the parabolic subgroup $G.$

\item $x=-Z-2Z'$ corresponding to the partition $2=(0+1)+(0+1)+0.$\\
The $\theta$-stable parabolic subalgebra corresponding to $x$ is:
$$\mathfrak{q}_2 = <Z,Z'> + <N_+,X_-,P_{1-},P_{0-}>$$
The Levi part is: $\mathfrak{l} = <Z,Z'>.$\\
$\mathfrak{l_0} = \mathfrak{l} \cap \mathfrak{sp}_4(\mathbb{R}) =  <iZ,iZ'> = \mathfrak{h}.$\\
$[\mathfrak{l_0}, \mathfrak{l_0}] = 0.$\\
So $\mathfrak{h} \cap [\mathfrak{l_0},\mathfrak{l_0}] = 0.$\\
Therefore, any highest weight $\lambda$ of $\mathfrak{h}$ is an admissible character of $\mathfrak{l} = \mathfrak{h}$. This $\theta$-stable parabolic subalgebra corresponds to the parabolic subgroup $B.$

\item $x=2Z-Z'$ corresponding to the partition $2=(1+0)+(0+1)+0.$\\
The $\theta$-stable parabolic subalgebra corresponding to $x$ is:
$$\mathfrak{q}_3 = <Z,Z'> + <N_+,X_+,P_{1+},P_{0-}>$$
The Levi part is: $\mathfrak{l} = <Z,Z'>.$\\
$\mathfrak{l_0} = \mathfrak{l} \cap \mathfrak{sp}_4(\mathbb{R}) =  <iZ,iZ'> = \mathfrak{h}.$\\
$[\mathfrak{l_0}, \mathfrak{l_0}] = 0.$\\
So $\mathfrak{h} \cap [\mathfrak{l_0},\mathfrak{l_0}] = 0.$\\
Therefore, any highest weight $\lambda$ of $\mathfrak{h}$ is an admissible character of $\mathfrak{l} = \mathfrak{h}$. This $\theta$-stable parabolic subalgebra corresponds to the parabolic subgroup $B.$

\item $x=2Z'-Z$ corresponding to the partition $2=(0+1)+(1+0)+0.$\\
The $\theta$-stable parabolic subalgebra corresponding to $x$ is:
$$\mathfrak{q}_4 = <Z,Z'> + <N_-,X_-,P_{1+},P_{0+}>$$
The Levi part is: $\mathfrak{l} = <Z,Z'>.$\\
$\mathfrak{l_0} = \mathfrak{l} \cap \mathfrak{sp}_4(\mathbb{R}) =  <iZ,iZ'> = \mathfrak{h}.$\\
$[\mathfrak{l_0}, \mathfrak{l_0}] = 0.$\\
So $\mathfrak{h} \cap [\mathfrak{l_0},\mathfrak{l_0}] = 0.$\\
Therefore, any highest weight $\lambda$ of $\mathfrak{h}$ is an admissible character of $\mathfrak{l} = \mathfrak{h}$. This $\theta$-stable parabolic subalgebra corresponds to the parabolic subgroup $B.$

\item $x=2Z+Z'$ corresponding to the partition $2=(1+0)+(1+0)+0.$\\
The $\theta$-stable parabolic subalgebra corresponding to $x$ is:
$$\mathfrak{q}_5 = <Z,Z'> + <N_+,X_+,P_{1+},P_{0+}>$$
The Levi part is: $\mathfrak{l} = <Z,Z'>.$\\
$\mathfrak{l_0} = \mathfrak{l} \cap \mathfrak{sp}_4(\mathbb{R}) =  <iZ,iZ'> = \mathfrak{h}.$\\
$[\mathfrak{l_0}, \mathfrak{l_0}] = 0.$\\
So $\mathfrak{h} \cap [\mathfrak{l_0},\mathfrak{l_0}] = 0.$\\
Therefore, any highest weight $\lambda$ of $\mathfrak{h}$ is an admissible character of $\mathfrak{l} = \mathfrak{h}$.
This $\theta$-stable parabolic subalgebra corresponds to the parabolic subgroup $B.$

\item $x=Z+Z'$ corresponding to the partition $2=(2+0)+0.$\\
The $\theta$-stable parabolic subalgebra corresponding to $x$ is:
$$\mathfrak{q}_6 = <Z,Z',N_+,N_-> + <X_+,P_{1+},P_{0+}>$$
Note that $\uu \cap \p = \uu$ which is also equal to the intersection of the unipotent part of $\q_5$ and $\p$. Thus $\q_6$ is equivalent to $\q_5$, and the corresponding $A_\q(\lambda)$'s are isomorphic.

\item $x=-(Z+Z')$ corresponding to the partition $2=(0+2)+0.$\\
The $\theta$-stable parabolic subalgebra corresponding to $x$ is:
$$\mathfrak{q}_7 = <Z,Z',N_+,N_-> + <X_-,P_{1-},P_{0-}>$$
Note that $\uu \cap \p = \uu$ which is also equal to the intersection of the unipotent part of $\q_2$ and $\p$. Thus $\q_7$ is equivalent to $\q_2$, and the corresponding $A_\q(\lambda)$'s are isomorphic.

\item $x=Z$ corresponding to the partition $2=(1+0)+1.$\\
The $\theta$-stable parabolic subalgebra corresponding to $x$ is:
$$\mathfrak{q}_8 = <Z,Z',P_{0+},P_{0-}> + <N_+,X_+,P_{1+}>$$
The Levi part is: $\mathfrak{l} = <Z,Z',P_{0+},P_{0-}>.$\\
$\mathfrak{l_0} = \mathfrak{l} \cap \mathfrak{sp}_4(\mathbb{R}) =  <iZ,iZ',P_{0+}+P_{0-},i(P_{0+}-P_{0-})>.$\\
$[\mathfrak{l_0}, \mathfrak{l_0}] = <\mathfrak{h}_2 = iZ'>.$\\
So $\mathfrak{h} \cap [\mathfrak{l_0},\mathfrak{l_0}] = <\mathfrak{h}_2 = iZ'>.$\\
Therefore, a highest weight $\lambda$ of $\mathfrak{h}$ can be extended to get an admissible character of $\mathfrak{l}$ if and only if $\lambda(\mathfrak{h}_2) = 0.$ Therefore, $\lambda$ has the form $(\lambda_1,0)$. This $\theta$-stable parabolic subalgebra corresponds to the parabolic subgroup $P_J.$

\item $x=-Z'$ corresponding to the partition $2=(0+1)+1.$\\
The $\theta$-stable parabolic subalgebra corresponding to $x$ is:
$$\mathfrak{q}_9 = \langle Z,Z',X_{+},X_{-}\> + \langle N_+,P_{1-},P_{0-}\>$$
The Levi part is: $\mathfrak{l} = \langle Z,Z',X_{+},X_{-}\>.$\\
$\mathfrak{l_0} = \mathfrak{l} \cap \mathfrak{sp}_4(\mathbb{R}) =  \langle iZ,iZ',X_{+}+X_{-},i(X_{+}-X_{-})\>.$\\
$[\mathfrak{l_0}, \mathfrak{l_0}] = \langle iZ, 2i(X_--X_-), 2(X_+-X_-) \>.$\\
So $\mathfrak{h} \cap [\mathfrak{l_0},\mathfrak{l_0}] = \langle \mathfrak{h}_1 = iZ \>.$\\
Therefore, a highest weight $\lambda$ of $\mathfrak{h}$ can be extended to get an admissible character of $\mathfrak{l}$ if and only if $\lambda= (0,\lambda_1).$
Note that this integral weight is conjugate under the Weyl group to an integral weight of the form $\lambda = (\lambda_1,0)$. This $\theta$-stable parabolic subalgebra corresponds to the parabolic subgroup $P_J$.

\item $x=Z-Z'$ corresponding to the partition $2=(1+1)+0.$\\
The $\theta$-stable parabolic subalgebra corresponding to $x$ is:
$$\mathfrak{q}_{10} = \langle Z,Z',P_{1+},P_{1-} \> + \langle N_+,X_+,P_{0-} \>$$
The Levi part is: $\mathfrak{l} = \langle Z,Z',P_{1+},P_{1-} \>.$\\
$\mathfrak{l_0} = \mathfrak{l} \cap \mathfrak{sp}_4(\mathbb{R}) =  \langle iZ,iZ',P_{1+}+P_{1-},i(P_{1+}-P_{1-}) \>.$\\
$[\mathfrak{l_0}, \mathfrak{l_0}] = \langle i(P_{1+}-P_{1-}),-(P_{1+}+P_{1-}), -2i(Z+Z') \>.$\\
So $\mathfrak{h} \cap [\mathfrak{l_0},\mathfrak{l_0}] = \langle \mathfrak{h}_1+\mathfrak{h}_2 \>.$\\
Therefore, a highest weight $\lambda$ of $\mathfrak{h}$ can be extended to get an admissible character of $\mathfrak{l}$ if and only if $\lambda(\mathfrak{h}_1+\mathfrak{h}_2) = 0$ i.e. $\lambda(\mathfrak{h}_1) = -\lambda(\mathfrak{h}_2)$.
Note that such an integral weight is conjugate to an integral weight of the form $(\lambda_1,\lambda_1)$.
This $\theta$-stable parabolic subalgebra corresponds to the parabolic subgroup $P_S$.
\end{enumerate}

We summarize the $\theta$-stable parabolic subalgebras and the relevant data as below:
\begin{center}
\begin{tabular}{|c|c|c|}
\hline
Parabolic & Corresponding & Possible highest\\
subagebras & Parabolic subgroups & weight $\lambda$ \\
\hline
$\q_1$ & $G$ & $\lambda = 0$\\
\hline
$\q_2 \sim \q_7,\q_3,\q_4,\q_5 \sim \q_6$ & $B$ & Any $\lambda$ \\
\hline
$\q_8$ & $P_J$ & $\lambda = (\lambda_1,0)$ \\
\hline
$\q_9$ & $P_J$ & $\lambda = (\lambda_1,0)$ \\
\hline
$\q_{10}$ & $P_S$ & $\lambda = (\lambda_1,\lambda_1)$\\
\hline
\end{tabular}
\end{center}

\section{Parabolic subgroups of $\SO(5,\C)$}
For $G=\Sp(4,\R)$, we know that ${}^LG^\circ=\SO(5,\C)$. Recall that, for a given representation $\pi$ of $G$ the Langlands parameter is a map $\phi(\pi): W_\R \rightarrow {}^LG$ and the image of $W_\R$ under $\phi$ is contained in a parabolic subgroup of ${}^LG^\circ$. Hence, we list down the parabolic subgroups of $\SO(5)$. For $\SO(5,\C)$, the choice of the bilinear form is $J=\text{anti-diag}(1,-1,1,-1,1).$  Then the maximal torus for $\SO(5,\C)$ contains elements of the form $\text{diag}(a,b,1,b^{-1},a^{-1})$. For $\SO(5,\C)$, we have $3$ proper parabolics which are enumerated below:

\begin{enumerate}
\item \textbf{Minimal parabolic:} The Borel $B=M_0A_0N_0$, corresponding to the empty subset of the base, with $M_0=\{I_5\},$ $A_0$ is the subset of the diagonal matrices of the form $A_0=\{ \text{diag}(a,b,1,b^{-1},a^{-1}): a,b \in \mathbb{C}^{\times} \},$ and \\
$N_B = \{ \begin{pmatrix}
1 & * & * & * & *\\
0 & 1 & * & * & *\\
0 & 0 & 1 & * & *\\
0 & 0 & 0 & 1 & *\\
0 & 0 & 0 & 0 & 1\\
\end{pmatrix} \} \subset \SO(5).$

\item \textbf{Siegel parabolic:} The Siegel Parabolic $P_S = M_SA_SN_S$, corresponding to the subset \newline $\Sigma=\{e_1-e_2\}$ of the base, with\\
$M_S=\{\begin{pmatrix}
A & 0 & 0 \\
0 & 1 & 0 \\
0 & 0 & A
\end{pmatrix}: A \in SL(2,\mathbb{C}) \},$\\

$A_S=\{ \text{diag}(a,a,1,a^{-1},a^{-1}): a \in \mathbb{C}^{\times} \},$ and\\

$N_S = \{ \begin{pmatrix}
1 & 0 & * & * & *\\
0 & 1 & * & * & *\\
0 & 0 & 1 & * & *\\
0 & 0 & 0 & 1 & 0\\
0 & 0 & 0 & 0 & 1\\
\end{pmatrix} \} \subset \SO(5).$\\

\item \textbf{Jacobi Parabolic:} The Jacobi parabolic $P_J = M_JA_JN_J$, corresponding to the subset \newline $\Sigma=\{e_2\}$ of the base, with\\
$M_J=\{\begin{pmatrix}
1 & 0 & 0 \\
0 & A & 0\\
0 & 0 & 1\\
\end{pmatrix}: A \in \SO(3) \} \in \SO(5),$\\

$A_J=\{ \text{diag}(a,1,1,1,a^{-1}): a \in \mathbb{C}^{\times} \},$ and\\

$N_J = \{ \begin{pmatrix}
1 & * & * & * & *\\
0 & 1 & 0 & 0 & *\\
0 & 0 & 1 & 0 & *\\
0 & 0 & 0 & 1 & *\\
0 & 0 & 0 & 0 & 1\\
\end{pmatrix} \} \subset \SO(5).$
\end{enumerate}

We can now compute the Langlands parameters for $A_\q(\lambda)$'s and compute their transfers to representations of $\GL(5,\R)$. 

\section[Trivial coefficients]{Cohomological representations with trivial coefficients}
\label{Transfer triv-coeff}
Let $\mathfrak{Q}(\lambda)$ be the set of all non-equivalent $\q$'s such that $\lambda$ can be extended to an admissible character of $\q$. Assuming $\lambda=0$, $\mathfrak{Q}(\lambda)$ consists of all the $8$ nonequivalent $\theta$-stable parabolic subalgebras listed in Section {\ref{theta-stable algebras}}.

\subsection{Trivial and the Discrete Series representations}
For $q_1=\mathfrak{sp}_4(\R)$, the representation $A_\q$ is the trivial representation of $\Sp(4,\R)$. This representation is transferred to the trivial representation of $\GL(5,\R)$ which is cohomological with respect to trivial coefficients.\\

From Remark \ref{DS}, we note that $A_\q$ is the discrete series representations if $\q$ is one of the following:
\begin{itemize}
\item $\q_2 = \langle Z,Z' \> \oplus \langle N_+,X_-,P_{1-},P_{0-} \>,$
\item $\q_3 = \langle Z,Z' \> \oplus \langle N_+,X_+,P_{1+},P_{0-} \>,$
\item $\q_4 = \langle Z,Z' \> \oplus \langle N_-,X_-,P_{1+},P_{0+} \>,$
\item $\q_5 = \langle Z,Z' \> \oplus \langle N_+,X_+,P_{1+},P_{0+} \>.$
\end{itemize}

The transfer of these representations has been dealt with in \cite{Ra-Sa} and we know that the transfer of these representations is cohomological with  respect to the trivial representation of $\GL(5,\R)$.

This leaves us with $3$ $\theta$-stable parabolic subalgebras and their corresponding cohomological representations. The remaining parabolic subalgebras are:
\begin{itemize}
\label{non-temp alg}
\item $\q_8=\langle Z,Z',P_{0+},P_{0-} \> \oplus \langle N_+,X_+,P_{1+} \>$
\item $\q_9=\langle Z,Z',X_{+},X_{-} \> \oplus \langle N_+,P_{1-},P_{0-} \>$
\item $\q_{10}=\langle Z,Z',P_{1+},P_{1-} \> \oplus \langle N_+,X_+,P_{0-} \>$
\end{itemize}

We analyze these case by case. The representations of $\Sp(4,\R)$, which are non-tempered and cohomological with respect to trivial coefficients are listed in Section $2$ of \cite{Oda-Sch}. We include these computations here in some detail.

\subsection{Case of the Jacobi $\theta$-stable subalgebra}
\label{P_J}
We deal with representations of $\Sp(4,\R)$ corresponding to the $\theta$-stable subalgebras $\q_8$ and $\q_9$.

The parabolic subgroup to which $\q_8$ corresponds is the Jacobi parabolic $P_J$. Recall that
$\q_8 = \lev \oplus \mathfrak{u}$, with $\lev = \langle Z,Z',P_{0+}, P_{0-} \rangle$, $\mathfrak{u} = \langle N_+,X_+,P_{1+} \rangle$ and $\lambda = 0$.

We choose a maximally split Cartan subgroup $H$ inside $L$. The Levi $L$ is isomorphic to $\GL(1,\R) \times \SL(2,\R)$. The Lie algebra corresponding to $H=TA$ is $\langle Z, P_{0+}+P_{0-} \rangle$. Note that the Lie algebras of $T$ and $A$ are generated by $Z$ and $P_{0+}+P_{0-} = \begin{pmatrix}
0 & 0 & 0 & 0 \\
0 & 1 & 0 & 0 \\
0 & 0 & 0 & 0 \\
0 & 0 & 0 & -1 \\
\end{pmatrix}$ respectively.

Now let, $\mathrm{Cent}_G(A) = MA.$ Then $M$ is isomorphic to $\SL(2,\R) \times \{\pm 1\}.$ To compute the Langlands parameter for the representation $A_{\q_8}$, we need a parabolic subgroup $P=MAN$ of $G = \Sp(4,\R)$, a discrete series representation on $M$ and a character $\nu$ of $\mathfrak{a}$. For the parabolic, choose any parabolic subgroup of $G$ which has Levi factor $MA$. The Jacobi parabolic $P_J$ is one such subgroup. This corresponds to the subset $\Sigma=\{ 2e_2 \}$ of the base. Thus, the representation $A_{\q_8}$ is obtained as the Langlands quotient of a representation which is induced from the Jacobi parabolic $P_J$.
The character on $\mathfrak{a}$ is obtained by restricting $\rho_L$ to $\mathfrak{a}$. Hence $$\nu = \rho_L\vert_{\mathfrak{a}} = \frac{1}{2}(0,2) = (0,1).$$

Now for the discrete series representation of $M$: The Harish-Chandra parameter for the representation of $M^{\circ} = \SL(2,\R)$, the connected component of $M$, is given by $\rho(u) + \rho(\mathfrak{m} \cap \lev)$ where $\rho$ is computed with respect to $\mathfrak{t}$. Observe that, $M \cap L = \{\pm 1 \}$ which implies that $\rho(\mathfrak{m} \cap \lev) = 0.$ We have $\mathfrak{u} = \langle N_+ , X_+ , P_{1+}\rangle.$ Thus $$\rho(u) = \frac{1}{2}((1+2+1),0) = (2,0).$$ 

The only question remains is whether the representation on $\{\pm 1 \} \subset M$ is the trivial one or the sign character. We compute this as follows:

The Lie algebra of $M$ is $\mathfrak{m} = \langle Z,P_{0+}+P_{0-},X_+,X_- \rangle.$ Then $M \cap K = \{\pm 1 \} \times \SO(2).$
The discrete series representation on  $M \cap K$ is the representation with highest weight given by the formula $2\rho \wedge^{\text{dim } \mathfrak{u} \cap \mathfrak{p}}(\mathfrak{u} \cap \mathfrak{p})\vert_{t}$ which in this case is $(2+1)=3.$ Thus the character on $\{\pm 1\}$ is given by $\epsilon : -1 \mapsto -1.$ Note that this computation gives us the discrete series representation on the $\{\pm 1 \}$ as well as the $\SL(2,\R)$ of the Levi part.

Now we compute the Langlands parameter for $A_{\q_8}.$ Note that since the representation $A_{\q_8}$ is induced from the parabolic $P_J$, the image of $W_\R$ should lie inside the corresponding parabolic subgroup of $P_J \subseteq \SO(5,\C)$.

The transfer of $A_{\q_8}$ to $\GL(5,\R)$ is the Langlands quotient of the following induced representation:
$$\Ind_{P}^G(D_4 \otimes \chi_1 \epsilon \otimes \chi_{-1} \epsilon \otimes \epsilon)$$
where $P$ is the $(2,1,1,1)$ parabolic subgroup of $\GL(5,\R)$ and $\chi_n(x) = x^n,$ since the Langlands parameter for $A_{\q_8}$ is given by
$$z \mapsto \begin{pmatrix}
(z\bar{z}) & 0 & 0 & 0 & 0 \\
0 & (\frac{z}{\bar{z}})^2 & 0 & 0 & 0 \\
0 & 0 & 1 & 0 & 0 \\
0 & 0 & 0 & (z\bar{z})^{-1} & 0 \\
0 & 0 & 0 & 0 & (\frac{z}{\bar{z}})^{-2} 
\end{pmatrix}; \hskip 5mm j \mapsto \begin{pmatrix}
-1 & 0 & 0 & 0 & 0 \\
0 & 0 & 0 & 0 & 1 \\
0 & 0 & -1 & 0 & 0 \\
0 & 0 & 0 & -1 & 0 \\
0 & 1 & 0 & 0 & 0 
\end{pmatrix}.$$

Observe that the Langlands quotient of $\Ind_{P}^G(D_4 \otimes \chi_1 \epsilon \otimes \chi_{-1} \epsilon \otimes \epsilon)$ is isomorphic to \newline $\Ind_P^G(D_4 \otimes \epsilon)$ where $P$ is the $(2,3)$ parabolic of $\GL(5,\R)$, and $\epsilon$ is the sign representation of $\GL(3,\R)$. This follows from the fact that for the Borel $B$ of $\GL(3,\R)$, the Langlands quotient of $\Ind_B^{\GL(3,\R)}(|\cdot| \otimes \epsilon \otimes |\cdot|^{-1})$, is $\epsilon$. We further note that
$\Ind_P^G(D_4 \otimes \epsilon) \cong \Ind_P^G(D_4 \otimes 1) \otimes \epsilon$.
Thus, this is a twist of a unitary representation by the sign character. Hence this representation is unitary. Thus we can appeal to Speh's classification and figure out whether the above representation is cohomological or not.

Since the transferred representation is induced from the $(2,1,1,1)$ parabolic and we only have one factor of $\GL(2,\R)$ in the inducing data, we consider the representation corresponding to the partition $5 = 3 + 2$ of $\GL(5,\R)$ in terms of Speh's classification \cite{Sp2}. 
For the partition $n = 5 = 3 + 2$, we have $n_0 = 3 , n_1 = 2, m_1 = 1$. The representation which is cohomological corresponding to this partition is obtained as a Langlands quotient of the $(2,1,1,1)$ parabolic. The discrete series representation on the $\GL(2,\R)$ part of the Levi is given by $exp(n - \sum\limits_{i = 2}n_i - m_1 )$, which is $4$ since for $i > 1$, $n_i,m_i = 0.$ Thus we observe that the representation which occurs in Speh's classification is $\Ind_P^G(D_4 \otimes 1)$. Thus, the transferred representation obtained from $A_{\q_8}$ does not occur in the classification of Speh. Hence the transfer of $A_{\q_8}$ is not a cohomological representation of $\GL(5,\R)$.

A similar computation for the parabolic subalgebra $\q_9$ shows that the representations $A_{\q_8}$ and $A_{\q_9}$ transfer to the same representation of $\GL(5,\R)$. Thus, the transfer of $A_{\q_8}$ and $A_{\q_9}$ to representations of $\GL(5,\R)$ are not cohomological.

\subsection{Case of Siegel $\theta$-stable parabolic subalgebra}
The last case left is the case when the $\theta$-stable parabolic subalgebra is $\q_{10} = \lev \oplus \mathfrak{u},$ with $\lev = \langle Z,Z',P_{1+},P_{1-} \rangle$ and $\mathfrak{u} = \langle N_+, X_+,P_{0-}  \rangle$.  Let $\lambda = 0$.

We choose a maximally split Cartan subgroup $H$ inside $L$. Then $L$ is isomorphic to $\GL(1,\R) \times \SL(2,\R)$. The Lie algebra corresponding to $H=TA$ is $\langle Z-Z', P_{1+}+P_{1-} \rangle$. Note that the Lie algebras of $T$ and $A$ are generated by $Z-Z'$ and  $P_{1+}+P_{1-} = \begin{pmatrix}
0 & 1 & 0 & 0 \\
1 & 0 & 0 & 0 \\
0 & 0 & 0 & -1 \\
0 & 0 & -1 & 0 \\
\end{pmatrix}$ respectively.

Now let, $\mathrm{Cent}_G(A) = MA.$ Then $M$ is isomorphic to $\SL(2,\R) \times \{\pm 1\}.$
To compute the Langlands parameter for the representation $A_{\q_{10}}$, we need a parabolic subgroup $P=MAN$ of $G = \Sp(4,\R)$, a discrete series representation on $M$ and a character $\nu$ of $\mathfrak{a}$. For the parabolic, choose any parabolic subgroup of $G$ which has Levi factor $MA$. The Siegel parabolic $P_S$ is such a parabolic. This parabolic subgroup corresponds to the subset $\Sigma=\{e_1 - e_2 \}$ of the base. The representation $A_{\q_{10}}$ is obtained as the Langlands quotient of a representation induced from the Siegel parabolic. Now we compute the other two parameters. The character on $\mathfrak{a}$ is obtained by restricting $\rho_L$ to $\mathfrak{a}$. Thus
$$\nu = \rho_L\vert_{\mathfrak{a}} = \frac{1}{2}(2,2) = (1,1).$$

Now for the discrete series representation of $M$: The Harish-Chandra parameter for the representation of $M^{\circ} = \SL(2,\R)$, the connected component of $M$, is given by $\rho(u) + \rho(\mathfrak{m} \cap \lev)$ where $\rho$ is computed with respect to $\mathfrak{t}$. Observe that, $M \cap L = \{\pm 1 \}$ which implies that $\rho(\mathfrak{m} \cap \lev) = 0.$ We have $\mathfrak{u} = \langle N_+ , X_+ , P_{0-}\rangle.$ Thus $$\rho(u) = \frac{1}{2}(2+2+2,2+2+2) = (3,3).$$ 

The only question remains is whether the representation on $\{\pm 1 \} \subset M$ is the trivial one or the sign character. We compute this as follows:

Note that $M \cap K = \{\pm 1 \} \times \SO(2).$
The discrete series representation on  $M \cap K$ is the representation with highest weight given by the formula $2\rho \wedge^{\text{dim } \mathfrak{u} \cap \mathfrak{p}}(\mathfrak{u} \cap \mathfrak{p})\vert_{t}$ which in this case is $(2+2)=4.$ Thus the character on $\{\pm 1\}$ is the trivial character. Now we compute the Langlands parameter for $A_{\q_{10}} \cong \Ind_{P_S}^G(D_3|det|^{\frac{1}{2}}).$

Since the representation $A_{\q_{10}}$ is induced from the parabolic $P_S$, the image of $W_\R$ should go inside $P_S$ which is a parabolic subgroup of $\SO(5,\C)$, corresponding to the subset $\Sigma=\{e_2\}$ of the base.
The Langlands parameter for $A_{\q_{10}}$ is given by
$$z \mapsto \text{diag}(z^2\bar{z}^{-1},z^{-1}\bar{z}^{2},1,z\bar{z}^{-2},z^{-2}\bar{z});$$ and  $$ j \mapsto \begin{pmatrix}
0 & -1 & 0 & 0 & 0 \\
1 & 0 & 0 & 0 & 0 \\
0 & 0 & 1 & 0 & 0 \\
0 & 0 & 0 & 0 & -1 \\
0 & 0 & 0 & 1 & 0 
\end{pmatrix}.$$

Thus the transfer of $A_{\q_{10}}$ to $\GL(5,\R)$ is the Langlands quotient of the following induced representation:
$$\Ind_{P}^G(D_3|det|^{\frac{1}{2}} \otimes 1 \otimes D_3|det|^{-\frac{1}{2}})$$
where $P$ is the $(2,1,2)$ parabolic subgroup of $\GL(5,\R)$. We need to analyze whether this representation occurs in the Speh's classification of unitary irreducible cohomological representations of $\GL(5,\R)$.

We consider the partition $5=1+4$. Using notations from Section \ref{Speh}, we have $n_0=1, n_1=4$ and $m_1=2$. The representation occurring in Speh's classification corresponding to this partition is $\Ind_{(1,4)}^{\GL(5,\R)}(1 \otimes I(3))$.

Now we must compute the corresponding Langlands data for this representation. Appealing to \ref{Speh}, we note that for the character $\chi(3)$ on $T_2^0$ given by $\chi(3)(e^{i\theta_1},e^{i\theta_2})=e^{3i(\theta_1+\theta_2)}$ and $\chi(3)|_{A^2}= exp(\frac{\rho_2}{2})=\frac{a_1}{a_2}$, $$I(3)=J(\chi(3)).$$

But as a representation of $\GL(4,\R)$, $J(\chi(3))= \Ind(D_3|det|^{\frac{1}{2}} \otimes D_3|det|^{-\frac{1}{2}}).$ Thus we note that $\Ind_{(1,4)}^{\GL(5,\R)}(1 \otimes I(3)) = \Ind_{P}^G(D_3|det|^{\frac{1}{2}} \otimes 1 \otimes D_3|det|^{-\frac{1}{2}})$. Hence, the transfer of $A_{\q_{10}}$ occurs in the classification of Speh and is hence cohomological.

\subsection{Summary}
\label{Summary Triv-coeff}
Thus, to summarize we have:
\begin{thm}\label{sp4-main-result-triv-coeff}
Let $\pi$ be an irreducible unitary representation of $\Sp(4,\R)$ such that $\pi$ has non-vanishing cohomology with trivial coefficients. Let $\iota(\pi)$ denote the transferred representation of $\pi$ to $\GL(5,\R)$. Then $\iota(\pi)$ is cohomological with trivial coefficients if $\pi$ is one of the following:
\begin{enumerate}
\item $\pi$ is the trivial representation,
\item $\pi$ is a discrete series representation of $\Sp(4,\R)$,
\item $\pi$ is induced from the Siegel parabolic.
\end{enumerate} 
\end{thm}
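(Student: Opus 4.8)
The plan is to prove the theorem by exhausting all cohomological representations of $\Sp(4,\R)$ with trivial coefficients via the Vogan--Zuckerman classification, computing the Langlands parameter (hence the explicit transfer to $\GL(5,\R)$) in each case, and then checking membership in Speh's list. The combinatorial input is already in place: by Lemma~\ref{Clas-theta-parabolics} and Lemma~\ref{char-theta-sta-subalg}, together with the table at the end of Section~\ref{V-Z for Sp}, the nonequivalent $\theta$-stable parabolic subalgebras $\q$ of $\mathfrak{sp}(4)$ admitting the admissible character $\lambda = 0$ are exactly $\q_1,\dots,\q_5,\q_8,\q_9,\q_{10}$ (up to the equivalences $\q_6\sim\q_5$, $\q_7\sim\q_2$). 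Each $A_\q(0)$ is one of: the trivial representation ($\q_1$); a discrete series ($\q_2,\dots,\q_5$, since $\lev\subseteq\kk$ by Remark~\ref{DS}); a Langlands quotient induced from the Jacobi parabolic ($\q_8,\q_9$); or a Langlands quotient induced from the Siegel parabolic ($\q_{10}$). So the theorem amounts to: the transfer is cohomological in cases (1), (2), (3), and fails precisely for $\q_8,\q_9$.

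First I would dispose of (1) and (2). For $\q_1$ the transfer of the trivial representation of $\Sp(4,\R)$ is the trivial representation of $\GL(5,\R)$, which is cohomological. For the discrete series cases $\q_2,\dots,\q_5$, I invoke the result of \cite{Ra-Sa}, cited in the excerpt, which already establishes that the transfer of a discrete series representation of $\Sp(4,\R)$ to $\GL(5,\R)$ is cohomological with respect to the trivial representation; nothing new is needed here. Next I turn to (3), i.e.\ $\q_{10}$, for which $\lev=\langle Z,Z',P_{1+},P_{1-}\rangle$, $\uu=\langle N_+,X_+,P_{0-}\rangle$, $\lambda=0$. Using the recipe of Section~\ref{V-Z Classification} (maximally split $\theta$-stable Cartan in $L$, the parabolic $P_S$ with Levi $MA$ where $M\cong\SL(2,\R)\times\{\pm1\}$, the character $\nu=\rho_L|_{\mathfrak{a}}=(1,1)$, the Harish-Chandra parameter $\rho(\uu)=(3,3)$ of the discrete series on $M^\circ$, and the $\{\pm1\}$-sign computed from $2\rho(\wedge^{\dim\uu\cap\p}(\uu\cap\p))|_{\ta}=4$, which is trivial), one identifies $A_{\q_{10}}\cong\Ind_{P_S}^{G}(D_3|\det|^{1/2})$. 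Composing the resulting Langlands parameter $W_\R\to\SO(5,\C)$ with $\SO(5,\C)\hookrightarrow\GL(5,\C)$ gives the transfer as the Langlands quotient of $\Ind_{(2,1,2)}^{\GL(5,\R)}(D_3|\det|^{1/2}\otimes 1\otimes D_3|\det|^{-1/2})$. Then I match this against Speh's classification (Theorem in Section~\ref{Speh}) for the partition $5=1+4$: there $n_0=1$, $n_1=4$, $m_1=2$, the cohomological representation is $\Ind_{(1,4)}^{\GL(5,\R)}(1\otimes I(3))$, and since $I(3)=J(\chi(3))$ realized inside $\GL(4,\R)$ equals $\Ind(D_3|\det|^{1/2}\otimes D_3|\det|^{-1/2})$, the two induced representations coincide. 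Hence the transfer of $A_{\q_{10}}$, and by the analogous Siegel-induced construction every $A_\q(0)$ induced from the Siegel parabolic, is cohomological, giving case (3).

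Finally, for completeness (to justify that the list (1)--(3) is the full answer) I would carry out the $\q_8$ computation in the same style: $\lev=\langle Z,Z',P_{0+},P_{0-}\rangle$, $\uu=\langle N_+,X_+,P_{1+}\rangle$, $\lambda=0$, parabolic $P_J$, $\nu=\rho_L|_{\mathfrak{a}}=(0,1)$, $\rho(\uu)=(2,0)$, sign character $\epsilon$ on $\{\pm1\}$ from $2\rho(\wedge^{\dim\uu\cap\p}(\uu\cap\p))|_{\ta}=3$. This yields the transfer as the Langlands quotient of $\Ind_{(2,1,1,1)}^{\GL(5,\R)}(D_4\otimes\chi_1\epsilon\otimes\chi_{-1}\epsilon\otimes\epsilon)\cong\Ind_{(2,3)}^{\GL(5,\R)}(D_4\otimes\epsilon)\cong\Ind_{(2,3)}^{\GL(5,\R)}(D_4\otimes 1)\otimes\epsilon$, which is unitary (a sign twist of a unitary representation). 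Comparing with Speh's list for $5=3+2$ ($n_0=3,n_1=2,m_1=1$), the candidate is $\Ind_{(2,3)}^{\GL(5,\R)}(D_4\otimes 1)$ with no sign twist; hence the transfer of $A_{\q_8}$ is \emph{not} in Speh's classification and is not cohomological, and the same holds for $\q_9$ (which transfers to the same representation). I expect the main obstacle to be the bookkeeping in extracting the explicit Langlands parameter from Vogan--Zuckerman data and tracking it through the embedding $\SO(5,\C)\hookrightarrow\GL(5,\C)$ and Langlands-quotient reductions in $\GL$; in particular, correctly pinning down the sign character on the disconnected part of $M$ (via the extremal-weight formula $\mu^M(\q,\lambda)$) is the delicate point that distinguishes the $\q_8,\q_9$ case from the $\q_{10}$ case. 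Once the parameters are in hand, the comparison with Speh's list is a finite check.
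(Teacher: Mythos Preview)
Your proposal is correct and follows essentially the same approach as the paper: exhaust the Vogan--Zuckerman list for $\lambda=0$, dispatch $\q_1$ trivially and $\q_2,\dots,\q_5$ via \cite{Ra-Sa}, compute the Langlands data for $\q_8,\q_9,\q_{10}$ exactly as you describe (including the sign on the disconnected part of $M$ via the extremal-weight formula), and compare the resulting $\GL(5,\R)$ representations against Speh's classification for the partitions $5=3+2$ and $5=1+4$. Your numerical computations, the identification $\Ind_{(2,1,1,1)}^{\GL(5,\R)}(D_4\otimes\chi_1\epsilon\otimes\chi_{-1}\epsilon\otimes\epsilon)\cong\Ind_{(2,3)}^{\GL(5,\R)}(D_4\otimes 1)\otimes\epsilon$, and the observation that the sign twist is precisely what excludes the Jacobi cases from Speh's list all match the paper's argument.
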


\section[Non-Trivial coefficients]{Cohomological representations with Non-Trivial coefficients}
\label{Transfer non-triv coeff}

In this section, we will let $\lambda=(\lambda_1,\lambda_2), \ \lambda_1 \geq \lambda_2 \geq 0$ be a non-zero highest weight of $\Sp(4,\R)$. We split the analysis in the following cases:
\begin{itemize}
\item $\lambda_1 = \lambda_2 \neq 0$,
\item $\lambda_1 > \lambda_2 \neq 0$,
\item $\lambda_2 = 0$. 
\end{itemize}

\subsection{$\lambda = (\lambda_1,\lambda_1)$, with 
$\lambda \neq 0$}

In this case, we note that the $\theta$-stable parabolic subalgebras which are relevant are $\q_2 \sim \q_7,\q_3,\q_4, \q_5 \sim \q_6$ and $\q_{10}$. Note that $A_{\q_i}$ for $2 \leq i \leq 7$ are discrete series representations. Thus, from \cite{Ra-Sa}, we know that these transfer to cohomological representations of $\GL(5,\R)$.

The remaining representations are the representations corresponding to the parabolic $\q_{10}$. As we have already seen, these representations are obtained as the Langlands quotient of a representation which is induced from the Siegel parabolic of $G = \Sp(4,\R)$. We compute the Langlands parameters for the representations $A_{q_{10}}(\lambda)$, as before. We note that the discrete series representation on $M_S$ is given by $2\lambda_1 + 3$. The character on $\mathfrak{a}$ does not change and is still given by $$\nu = \rho_L\vert_{\mathfrak{a}} = \frac{1}{2}(2,2) = (1,1).$$
Thus, the representation $A_{\q_{10}}(\lambda)$ is the irreducible Langlands quotient of the induced representation $\Ind_{P_S}^G(D_{2\lambda_1+3}|det|^{\frac{1}{2}}).$
We note that the Langlands parameter of $A_{\q_{10}}(\lambda)$ is given by
$$z \mapsto \text{diag}(z^{\lambda_1+2}\bar{z}^{-\lambda_1-1},z^{-\lambda_1-1}\bar{z}^{\lambda_1+2},1,z^{\lambda_1+1}\bar{z}^{-\lambda_1-2},z^{-\lambda_1-2}\bar{z}^{\lambda_1+1})$$ and $$ j \mapsto \begin{pmatrix}
0 & (-1)^{2\lambda_1+3} & 0 & 0 & 0 \\
1 & 0 & 0 & 0 & 0 \\
0 & 0 & 1 & 0 & 0 \\
0 & 0 & 0 & 0 & (-1)^{2\lambda_1+3} \\
0 & 0 & 0 & 1 & 0 
\end{pmatrix}.$$

\noindent
Thus, the transfer of $A_{\q_{10}}(\lambda)$ to $\GL(5,\R)$ is obtained as a Langlands quotient of $$\Ind_{(2,1,2)}^{\GL(5,\R)}(D_{2\lambda_1+3}|det|^{\frac{1}{2}}\otimes 1 \otimes D_{2\lambda_1+3}|det|^{-\frac{1}{2}}).$$

The question whether this representation of $\GL(5,\R)$ is cohomological or not does not seem to have an easy answer since the main ingredient, which is the Speh's classification for cohomological representations of $\GL(n,\R)$ with non-trivial coefficients is not available.
The expectation is that this representation is cohomological.

\subsection{$\lambda = (\lambda_1,\lambda_2)$, with $\lambda_2 \neq 0$ and $\lambda_1 > \lambda_2$}

In this case, we note that the $\theta$-stable parabolic subalgebras which are relevant are $\q_2 \sim \q_7,\q_3,\q_4$ and $\q_5 \sim \q_6$. For these subalgebras the Levi parts, $\lev$, are contained in $\kk$ and hence the representations $A_\q$ are the discrete series representations. From \cite{Ra-Sa}, we know that the transfer of these representations are cohomological.

Thus we have:
\begin{prop}
Let $\lambda=(\lambda_1,\lambda_2)$, $\lambda_1 > \lambda_2 \neq 0$. Then the transfer of $A_{\q}(\lambda)$ to $\GL(5,\R)$ is cohomological.
\end{prop}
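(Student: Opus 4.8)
The plan is to reduce the statement to the case of discrete series representations, which has already been handled in the cited work \cite{Ra-Sa}. First I would recall from Section \ref{V-Z for Sp}, in particular the summary table of $\theta$-stable parabolic subalgebras, which $\q$'s admit an extension of a given admissible character $\lambda=(\lambda_1,\lambda_2)$ to $\lev$. The constraint is that $\lambda$ must vanish on $\mathfrak{h}\cap[\lev_0,\lev_0]$ and on $\mathfrak{a}$. Running through the list: $\q_1$ forces $\lambda=0$; $\q_8$ and $\q_9$ force $\lambda=(\lambda_1,0)$, hence $\lambda_2=0$, which is excluded; and $\q_{10}$ forces $\lambda=(\lambda_1,\lambda_1)$, which is also excluded since we assume $\lambda_1>\lambda_2$. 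Therefore the only surviving $\theta$-stable parabolic subalgebras are $\q_2\sim\q_7$, $\q_3$, $\q_4$, and $\q_5\sim\q_6$, all of which have Levi part $\lev=\langle Z,Z'\rangle\subseteq\kk$.

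Next I would invoke Remark \ref{DS}: since $\lev\subseteq\kk$ for each of the remaining $\q$'s, the representation $A_\q(\lambda)$ is a discrete series representation of $\Sp(4,\R)$ (indeed these exhaust the discrete series with infinitesimal character $\lambda+\rho$). Having identified all the relevant $A_\q(\lambda)$ as discrete series, I would then simply cite \cite{Ra-Sa}, where the transfer of (cohomological) discrete series representations of a classical group to the appropriate general linear group is shown to be cohomological; in our situation this gives that $\iota(A_\q(\lambda))$ is a cohomological representation of $\GL(5,\R)$ with respect to the finite-dimensional coefficient system $\iota(M_\lambda)$ determined by $\lambda$. Concatenating the two steps yields the proposition.

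The genuinely substantive point — and the only place where any real work is hidden — is the bookkeeping in the first step: one must be certain that no $\theta$-stable parabolic subalgebra other than those in the table can support an admissible $\lambda$ with $\lambda_1>\lambda_2\neq 0$, and that the equivalence relation $\q_i\sim\q_j$ collapses the list exactly as claimed, so that \emph{every} cohomological $\pi$ with this infinitesimal character is of the form $A_\q(\lambda)$ for one of the four surviving classes. This is purely a matter of reading off the classification already set up in Section \ref{V-Z for Sp}, so I expect no obstacle; the main ``risk'' is a transcription error in matching $\lambda$-constraints to $\q$'s. Everything downstream is a black-box appeal to \cite{Ra-Sa}, so unlike the trivial-coefficients case there is no need to engage with a Speh-type classification for $\GL(5,\R)$ — which is precisely why the non-tempered cases (handled in the preceding subsection) remain open while this case closes cleanly.
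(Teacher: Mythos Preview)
Your proposal is correct and follows essentially the same argument as the paper: the paper also observes that for $\lambda_1>\lambda_2\neq 0$ the only relevant $\theta$-stable parabolic subalgebras are $\q_2\sim\q_7,\q_3,\q_4,\q_5\sim\q_6$, notes that their Levi parts lie in $\kk$ so the $A_\q(\lambda)$ are discrete series, and then cites \cite{Ra-Sa}. Your write-up is in fact slightly more explicit in ruling out $\q_1,\q_8,\q_9,\q_{10}$ case by case, but the logic is identical.
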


\subsection{$\lambda = (\lambda_1,0)$}

The $\theta$-stable parabolic subalgebras which are relevant are $\q_2 \sim \q_7,\q_3,\q_4, \q_5 \sim \q_6, \q_8$ and $\q_9$. Out of these $6$ $\theta$-stable parabolic subalgebras, $\q_2 \sim \q_7,\q_3,\q_4$ and $\q_5 \sim \q_6$ correspond to the discrete series representations and we know that these transfer to cohomological representations of $\GL(5,\R)$ from \cite{Ra-Sa}.

This leaves us with the representations $A_{\q_8}(\lambda)$, $A_{\q_9}(\lambda)$. Note that for the $\theta$-stable parabolic subalgebra $\q_8$, $\lambda \vert_\ta = \lambda_1$ and $\lambda \vert_{\mathfrak{a}} = 0.$ These observations along with the calculations in section \ref{P_J} imply that the Langlands parameter for the representation $A_{\q_8}(\lambda)$ is given by:
$$z \mapsto \text{diag}(z\bar{z},(\frac{z}{\bar{z}})^{\frac{\lambda_1+2}{2}},1,(z\bar{z})^{-1},(\frac{z}{\bar{z}})^{-\frac{\lambda_1+2}{2}})$$
$$j \mapsto \begin{pmatrix}
-1 & 0 & 0 & 0 & 0 \\
0 & 0 & 0 & 0 & (-1)^{\lambda_1+2} \\
0 & 0 & 1 & 0 & 0 \\
0 & 0 & 0 & -1 & 0 \\
0 & 1 & 0 & 0 & 0 
\end{pmatrix}.$$

Hence, we observe that the transfer of $A_{\q_8}(\lambda)$ is obtained by taking the Langlands quotient of:
$$\Ind_{P}^G(D_{\lambda_1+2} \otimes \chi_1 \epsilon \otimes \chi_{-1} \epsilon \otimes \epsilon),$$
where $P$ is the $(2,1,1,1)$-parabolic subgroup of $\GL(5,\R)$, $\chi_n(x)=x^n$ and $\epsilon$ is the sign character on $\R^\times$.

A similar calculation as above shows that the transfer of $A_{\q_9}(\lambda)$ is also the Langlands quotient of
$$\Ind_{P}^G(D_{\lambda_1+2} \otimes \chi_1 \epsilon \otimes \chi_{-1} \epsilon \otimes \epsilon),$$ where $P$ is as above.
The question whether this representation of $\GL(5,\R)$ is cohomological or not does not seem to have an easy answer since the main ingredient, which is the Speh's classification for cohomological representations of $\GL(n,\R)$ with non-trivial coefficients is not available at the moment.
The expectation is that this representation is not cohomological. 
\subsection{Summary}
\label{Summary non-triv coeff}
Finally, to summarize the results we put everything in a tabular form. The table completely answers which unitary, irreducible cohomological representations of $G=\Sp(4,\R)$ are transferred to cohomological representations of $\GL(5,\R)$ in the $\lambda=0$ case. In the non-trivial coefficients case, it seems like a difficult question at the moment since no analogous result of Speh's classification for cohomological representations with non-trivial coefficients seem to exist. One hopes to prove this and obtain a complete result for the case $G=\Sp(4,\R).$ 

\begin{center}
\begin{tabular}{|c|c|c|c|}
\hline
Representation & Corresponding & $\lambda$ & Transfer \\ &  Parabolic & & cohomological or not \\
\hline
$A_{\q_1}$ & $B$ & $\lambda = 0$ & Cohomological\\
\hline
$A_{q_2} \cong A_{\q_7},A_{\q_3}$ & $B$ & Any $\lambda$ & Cohomological \\
$A_{\q_4},A_{\q_5} \cong A_{\q_6}$ & & & \\
\hline
$A_{\q_8}$ & $P_J$ & $\lambda=0$ & Not Cohomological \\
 & & $\lambda=(\lambda_1,0)$ & Expected to be not cohomological \\
\hline
$A_{\q_9}$ & $P_J$ & $\lambda=0$ & Not Cohomological \\
 & & $\lambda=(\lambda_1,0)$ & Expected to be not cohomological \\
\hline
$A_{\q_{10}}$ & $P_S$  & $\lambda = 0$ & Cohomological\\
& & $\lambda=(\lambda_1,\lambda_1)$ & Expected to be cohomological\\
\hline
\end{tabular}
\end{center}
\newpage
Considering the observations made above, we make the following conjecture:

\begin{conjecture}
Let $\pi$ be an irreducible unitary representation of $\Sp(4,\R)$ such that $\pi$ has non-vanishing cohomology. Let $\iota(\pi)$ denote the transferred representation of $\pi$ to $\GL(5,\R)$. Then $\iota(\pi)$ is cohomological if $\pi$ is one of the following:
\begin{enumerate}
\item $\pi$ is the trivial representation,
\item $\pi$ is a discrete series representation of $\Sp(4,\R)$,
\item $\pi$ is induced from the Siegel parabolic.
\end{enumerate}
Further, if $\pi$ is cohomological with respect to the finite dimensional representation $M_\lambda$, then $\iota(\pi)$ is cohomological with respect to $\iota(M_\lambda)$.
\end{conjecture}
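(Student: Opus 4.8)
The plan is to combine the two classification theorems recalled above. By the Vogan--Zuckerman classification (Section~\ref{V-Z Classification}), every irreducible unitary representation $\pi$ of $\Sp(4,\R)$ with non-vanishing cohomology with trivial coefficients is of the form $A_\q(0)$ for a $\theta$-stable parabolic subalgebra $\q$, with equivalent $\q$'s (in the sense of Section~\ref{V-Z for Sp}) giving isomorphic representations; since admissibility forces $\lambda=0$ on every $\q$, the list of Section~\ref{theta-stable algebras} is exhaustive, and up to equivalence it consists of the eight subalgebras $\q_1,\q_2,\q_3,\q_4,\q_5,\q_8,\q_9,\q_{10}$. For each $\q$ I would realize $A_\q(0)$ as the Langlands quotient of an induced representation of $\Sp(4,\R)$ using the recipe of Section~\ref{V-Z Classification}, read off its Langlands parameter $\phi\colon W_\R\to\SO(5,\C)$, compose with the standard inclusion $\SO(5,\C)\hookrightarrow\GL(5,\C)$ to get the parameter of $\iota(\pi)$, identify $\iota(\pi)$ with an explicit induced representation of $\GL(5,\R)$, and test whether it occurs in Speh's list (Section~\ref{Speh}) of cohomological representations of $\GL(5,\R)$ with trivial coefficients.

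Two of the families are immediate. For $\q_1=\mathfrak{sp}_4(\C)$ the representation $A_{\q_1}(0)$ is trivial and transfers to the trivial representation of $\GL(5,\R)$, which is cohomological; this is case (1). By Remark~\ref{DS}, the subalgebras with $\lev\subseteq\kk$, namely $\q_2,\q_3,\q_4,\q_5$, give exactly the discrete series representations of $\Sp(4,\R)$, and their transfers are cohomological by \cite{Ra-Sa}; this is case (2). It remains to treat the three genuinely non-tempered families: $A_{\q_8}(0)$ and $A_{\q_9}(0)$ (with $\q_8,\q_9$ of Jacobi type) and $A_{\q_{10}}(0)$ (with $\q_{10}$ of Siegel type).

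For each of these, the steps are: (a) choose a maximally split $\theta$-stable Cartan $H=TA$ of the Levi $L$ and identify $MA=\mathrm{Cent}_G(A)$, which here is $(\SL(2,\R)\times\{\pm1\})\cdot A$; (b) compute $\nu=\rho_L|_{\mathfrak{a}}$ and the discrete series $\sigma$ of $M$, fixing its restriction to $\{\pm1\}$ via the lowest $M\cap K$-type formula $\mu^M(\q,0)=2\rho(\wedge^{\dim(\uu\cap\p)}(\uu\cap\p))|_{\ta}$; (c) write down $\phi$ using the parabolic subgroups of $\SO(5,\C)$ listed above and so present $\iota(\pi)$ as an induced representation of $\GL(5,\R)$; (d) compare with Speh's classification. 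For $\q_8$ and $\q_9$ one obtains the same transfer, the Langlands quotient of $\Ind_{(2,1,1,1)}^{\GL(5,\R)}(D_4\otimes\chi_1\epsilon\otimes\chi_{-1}\epsilon\otimes\epsilon)$, which collapses to $\Ind_{(2,3)}^{\GL(5,\R)}(D_4\otimes\epsilon)\cong\Ind_{(2,3)}^{\GL(5,\R)}(D_4\otimes 1)\otimes\epsilon$ (using that the Langlands quotient of $\Ind_B^{\GL(3,\R)}(|\cdot|\otimes\epsilon\otimes|\cdot|^{-1})$ is $\epsilon$); this is unitary, but it differs by a sign twist from the unique representation attached to the partition $5=3+2$ in Speh's classification, so it is not on Speh's list and hence not cohomological. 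For $\q_{10}$ one obtains the Langlands quotient of $\Ind_{(2,1,2)}^{\GL(5,\R)}(D_3|\det|^{1/2}\otimes 1\otimes D_3|\det|^{-1/2})$, which by $I(3)=J(\chi(3))$ and \cite{Sp2} Proposition~4.1.1 equals $\Ind_{(1,4)}^{\GL(5,\R)}(1\otimes I(3))$, exactly the representation attached to the partition $5=1+4$; hence it is cohomological. Thus $\q_{10}$ gives case (3), which together with cases (1) and (2) proves the theorem (and, since $\q_8,\q_9$ fail, shows the list is complete).

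The step I expect to be the crux is (b): correctly determining the discrete series $\sigma$ of the \emph{disconnected} Levi $M$, equivalently its sign character on $\{\pm1\}$. This is precisely what separates the non-cohomological transfers of $\q_8,\q_9$ (which carry the character $\epsilon$) from cohomological ones, and it is what is needed to place $\iota(\pi)$ unambiguously inside Speh's parametrization in the $\q_{10}$ case. A secondary point to check is that $\Ind_{(2,3)}^{\GL(5,\R)}(D_4\otimes\epsilon)$ is genuinely unitary, so that Speh's classification applies and the negative conclusion for $\q_8,\q_9$ is valid; this holds because it is a character twist of a unitary representation.
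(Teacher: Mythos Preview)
The statement you are attempting to prove is a \emph{conjecture}, and the paper does not prove it. What you have written is essentially the paper's proof of Theorem~\ref{sp4-main-result-triv-coeff}, the trivial-coefficients case $\lambda=0$: your case analysis, the computation of the Langlands data for $A_{\q_8},A_{\q_9},A_{\q_{10}}$, and the comparison with Speh's list all match the paper's argument in Section~\ref{Transfer triv-coeff}. So for $\lambda=0$ your proposal is correct and is the same approach as the paper's.

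The gap is that the conjecture concerns arbitrary coefficients $M_\lambda$, and your argument does not touch the cases $\lambda\neq 0$ at all. Concretely, for $\q_{10}$ with $\lambda=(\lambda_1,\lambda_1)$, $\lambda_1>0$, the transfer is the Langlands quotient of $\Ind_{(2,1,2)}^{\GL(5,\R)}(D_{2\lambda_1+3}|\det|^{1/2}\otimes 1\otimes D_{2\lambda_1+3}|\det|^{-1/2})$, and for $\q_8,\q_9$ with $\lambda=(\lambda_1,0)$ the transfer is the Langlands quotient of $\Ind_{(2,1,1,1)}^{\GL(5,\R)}(D_{\lambda_1+2}\otimes\chi_1\epsilon\otimes\chi_{-1}\epsilon\otimes\epsilon)$. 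Your step~(d) invokes Speh's classification from \cite{Sp2}, but that classification is only for \emph{trivial} coefficients; the paper explicitly notes that the analogue for non-trivial coefficients is not available, which is precisely why the statement is left as a conjecture. Nothing in your outline supplies a substitute tool for deciding cohomologicality of these transfers when $\lambda\neq 0$, nor do you address the final clause asserting that the coefficient system on the $\GL(5,\R)$ side is $\iota(M_\lambda)$.
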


\bigskip

\bigskip

\end{document}